\newcommand{\pcite}[1]{\citeauthor{#1}'s \citeyearpar{#1}}
\newtheorem{theorem}{Theorem}
\newtheorem{corollary}{Corollary}
\newtheorem{lemma}{Lemma}
\theoremstyle{remark}
\newtheorem{definition}{Definition}
\newfont{\msbm}{msbm10 at 11pt}
\begin{document}

\begin{frontmatter}

\title{On Convergence Properties of the Monte Carlo EM Algorithm}
\runtitle{Convergence of MCEM}

\author{\fnms{Ronald C.} \snm{Neath}\corref{}\ead[label=e1]{rneath@hunter.cuny.edu}}
\address{Department of Mathematics and Statistics 
Hunter College, City University of New York
\printead{e1}}
\affiliation{Hunter College, City University of New York}

\begin{abstract}
  The Expectation-Maximization (EM) algorithm
  \citep{demp:lair:rubi:1977} is a popular method for computing
  maximum likelihood estimates (MLEs) in problems with missing data.
  Each iteration of the algorithm formally consists of an E-step:
  evaluate the expected complete-data log-likelihood given the
  observed data, with expectation taken at current parameter estimate;
  and an M-step: maximize the resulting expression to find the updated
  estimate.  Conditions that guarantee convergence of the EM sequence
  to a unique MLE were found by \citet{boyl:1983} and \citet{wu:1983}.
  In complicated models for high-dimensional data, it is common to
  encounter an intractable integral in the E-step.  The Monte Carlo EM
  algorithm of \citet{wei:tann:1990} works around this difficulty by
  maximizing instead a Monte Carlo approximation to the appropriate
  conditional expectation.  Convergence properties of Monte Carlo EM
  have been studied, most notably, by \citet{chan:ledo:1995} and
  \citet{fort:moul:2003a}.

  The goal of this review paper is to provide an accessible but
  rigorous introduction to the convergence properties of EM and Monte
  Carlo EM.  No previous knowledge of the EM algorithm is assumed.  We
  demonstrate the implementation of EM and Monte Carlo EM in two
  simple but realistic examples.  We show that if the EM algorithm
  converges it converges to a stationary point of the likelihood, and
  that the rate of convergence is linear at best.  For Monte Carlo EM
  we present a readable proof of the main result of
  \citet{chan:ledo:1995}, and state without proof the conclusions of
  \citet{fort:moul:2003a}.  An important practical implication of
  \pcite{fort:moul:2003a} result relates to the determination of Monte
  Carlo sample sizes in MCEM; we provide a brief review of the
  literature \citep{boot:hobe:1999, caff:jank:jone:2005} on that
  problem.
\end{abstract}

\begin{keyword}[class=AMS]
\kwd[Primary ]{62-02}
\end{keyword}

\begin{keyword}
\kwd{Convergence, EM algorithm, Maximum likelihood, Mixed model, Monte Carlo}
\end{keyword}

\end{frontmatter}

\section{Introduction: The Monte Carlo EM algorithm}
\label{sec:Intro}

The expectation-maximization, or EM algorithm, is an algorithm for 
maximizing likelihood functions, especially in the presence of 
missing data.  When EM works, the algorithm's output is a sequence of 
parameter values that converges to the maximum likelihood estimate (MLE).  
The seminal paper on EM, and that which gave the algorithm its name, is 
the article by \citet{demp:lair:rubi:1977}.  A book length treatment is 
given by \citet{mcla:kris:1997}.  

Consider a statistical model in which the random vector $(Y, U)$, 
$Y \in \mathbb{R}^N$ and $U \in \mathbb{R}^q$, has distribution given 
by $f(y, u; \theta)$, a density with respect to the measure $\lambda 
\times \mu$, where $\lambda$ and $\mu$ are measures on $\mathbb{R}^N$ and 
$\mathbb{R}^q$ respectively, and indexed by the unknown parameter 
$\theta \in \Theta$.  We refer to $(Y,U)$ as the ``complete data'' but 
only $Y = y$ is observed; $U$ represents the unobserved or ``missing'' 
data.  The MLE of $\theta$ is the value $\hat{\theta}$ which maximizes 
the likelihood function 
\begin{equation}
\label{eqn:L}
L(\theta; y) = \int{ f(y, u; \theta) \mu( du) } \; 
\end{equation}
or, equivalently, the log likelihood $l(\theta; y) = \log L(\theta; y)$.  
The EM algorithm can be used to find $\hat{\theta}$ even if the integral 
in \eqref{eqn:L} is intractable.  Define the $Q$-function, a mapping on 
$\Theta \times \Theta$, by 
\begin{equation}
\label{eqn:Q}
Q( \theta | \tilde{\theta}; y) = \mathrm{E} \left\{ \log f(y, U ; \theta) 
~\big{\vert} ~y; \tilde{\theta} \right\} \; ,
\end{equation}
that is, the expected value of the ``complete data'' log-likelihood 
at $\theta$, given the observed data, this conditional expectation 
evaluated under $\tilde{\theta}$.  
Each EM iteration formally consists of an E-step, to evaluate the 
conditional expectation in \eqref{eqn:Q}, 
and an M-step, 
to maximize it.  More precisely, if $\theta^{(t)}$ is the parameter value 
as of the $t$th iteration, the update $\theta^{(t+1)}$ is chosen such 
that $Q(\theta^{(t+1)} | \theta^{(t)}; y) \geq Q(\theta | \theta^{(t)}; y)$ 
for all $\theta \in \Theta$.  Under regularity conditions 
\citep[][and see Section \ref{sec:EM} below]{boyl:1983,wu:1983}, and 
given a suitable starting value 
$\theta^{(0)}$, the resulting sequence $\left\{ \theta^{(t)}: 
t = 0, 1, \ldots \right\}$ will converge to a local maximizer of $L$.  

If the integral in \eqref{eqn:Q} admits a closed form solution, 
the implementation of EM is straightforward (though the M-step may 
still require a numerical optimization scheme such as Newton-Raphson).  
Suppose it does not.  As noted, the evaluation of \eqref{eqn:Q} 
requires taking an expectation with respect to the conditional 
distribution of the missing data $U$, given observed data $Y = y$.  If 
one has the means to simulate random draws from this target distribution, 
the $Q$-function can be approximated by Monte Carlo integration. 
Let $u^{(1)}, \ldots, u^{(m)}$ denote a random sample from $h(u | y; 
\tilde{\theta}) = f(y, u; \tilde{\theta})/L(\tilde{\theta}; y)$.  
Then a Monte Carlo approximation to \eqref{eqn:Q} is given by
$$
Q_m( \theta | \tilde{\theta}; y) = \frac{1}{m} \sum_{k=1}^m 
\log f(y, u^{(k)}; \theta) \; .
$$
In the Monte Carlo EM algorithm (MCEM), 
first introduced by \citet{wei:tann:1990}, the update $\theta^{(t+1)}$ is 
the value of $\theta$ that maximizes $Q_m(\theta | \theta^{(t)}; y)$.  

Applications of EM and MCEM have been numerous; in this work we focus 
on one in particular, the two-stage hierarchical model, introduced in 
Section \ref{sec:2shm}.  We give two simple but realistic examples from 
this class of models, and demonstrate the implementation of EM and MCEM 
in those two problems.  In Section \ref{sec:EM} we discuss convergence 
properties of the EM algorithm.  Of course, the question of convergence 
for MCEM is far more complicated, and an accessible discussion of the 
major results in this area is the main objective of this review paper.  
In Section \ref{sec:MCEM} we provide a rigorous but accessible review 
of the two seminal papers on MCEM convergence, those of 
\citet{chan:ledo:1995} and \citet{fort:moul:2003a}.  We make some 
concluding remarks in Section \ref{sec:conclusion}.

\section{Application: The two-stage hierarchical model}
\label{sec:2shm}

Let $Y = (Y_1, \ldots, Y_N)^T$, where each $Y_i$ is a random variable in 
$\mathbb{R}^1$, denote the observable data.  In a \emph{two-stage 
hierarchical model}, the distribution of $Y$ is specified conditionally on 
some unobservable random quantity $U = (U_1, \ldots, U_q)^T$.  Specifically, 
we assume that conditional on $U = u$, the $Y_i$ are independent with 
conditional densities denoted by $f_i(y_i | u_i; \theta_1)$, where 
$\theta_1 \in \Theta_1$ is an unknown parameter and each $f_i$ is a 
density with respect to Lebesgue or counting measure.  The $f_i$ may also 
depend on an observable covariate $x_i$ though this dependence is 
suppressed in our notation.  Define $f(y | u; \theta_1) = 
\prod_{i=1}^N f_i(y_i | u; \theta_1)$, a density on $\mathbb{R}^N$, and 
this completes specification of the first level, or \emph{stage}, of the 
hierarchy.  At the second stage we specify a marginal distribution for $U$, 
defined by $h(u; \theta_2)$, a density on $\mathbb{R}^q$ that depends on 
the unknown parameter $\theta_2 \in \Theta_2$.  Assume 
the parameter spaces $\Theta_1$ and $\Theta_2$ are open subsets of 
$\mathbb{R}^{d_1}$ and $\mathbb{R}^{d_2}$, respectively.  Let $d = d_1 + d_2$.  
The unknown parameter $\theta = (\theta_1, \theta_2)$ lies in the 
parameter space $\Theta = \Theta_1 \times \Theta_2$, an open subset of 
$\mathbb{R}^d$.  

Suppose we wish to compute maximum likelihood estimates (MLEs) of 
$\theta_1$ and $\theta_2$.  
Were the random effects $U$ observable the likelihood function would be 
given by what we will call the \emph{complete data likelihood}
$
L_c (\theta; y, u) = f(y | u; \theta_1) h(u; \theta_2) 
$.
But since only the data $Y$ are observed, the random effects must be 
integrated out of $L_C$ yielding the likelihood function
\begin{equation}
\label{eqn:likelihood}
L(\theta; y) = \int{ L_c( \theta; y, u) du } 
 = \int{ f(y | u; \theta_1) h(u; \theta_2) du } \; .
\end{equation}
We wish to find the value of $\theta$ that maximizes $L$, that is, the 
MLE $\hat{\theta}$.  

It will most often be the case that the integral in \eqref{eqn:likelihood} 
is intractable.  \citet{boot:hobe:jank:2001} provide a very nice summary 
of numerical and Monte Carlo methods available for maximum likelihood in 
this problem, arriving at the conclusion that ``Monte Carlo EM is generally 
the simplest and most efficient Monte Carlo fitting algorithm for 
two-stage hierarchical models.''  As noted above, the EM algorithm is a 
general method for maximum likelihood in the presence of missing data; 
hierarchical models are cast in this light by viewing the unobserved 
random effects as ``missing''.  

Let $l_c = \log L_c$ denote the complete data log likelihood, so
$$
l_c(\theta; y, u) = \log f(y | u; \theta_1) + \log h(u; \theta_2) \; .
$$
Thus in the setting of hierarchical models, the EM update rule introduced 
in Section \ref{sec:Intro} can be written
\begin{equation}
\label{eqn:EM.2shm}
\begin{split}
\theta_1^{(t+1)} & = \arg \max \mathrm{E} \left\{ \log f(y | U; \theta_1) 
\big{\vert} ~y; \theta^{(t)} \right\} \; , \\
\theta_2^{(t+1)} & = \arg \max \mathrm{E} \left\{ \log h( U; \theta_2) 
\big{\vert} ~y; \theta^{(t)} \right\} \; , \\
\end{split}
\end{equation}
that is, the update of $\theta_1$ and that of $\theta_2$ can be considered 
separately.

If one or both of the expectations in \eqref{eqn:EM.2shm} 
is intractable, one might employ the Monte Carlo 
EM algorithm.  The MCEM update rule for the two-stage hierarchical model 
is given here.  Let $\theta^{(t)} = (\theta_1^{(t)}, \theta_2^{(t)})$ denote 
the current parameter value; then $\theta^{(t+1)}$ is found by
\begin{enumerate}
\item Simulate $u^{(t,1)}, \ldots, u^{(t,m)}$, a random sample from the 
conditional density $h(u | y; \theta^{(t)})$; 
\item Compute updates
\[
\begin{split}
\theta_1^{(t+1)} & = \arg \max \left\{ \frac{1}{m} \sum_{k=1}^m \log 
f(y | u^{(t,k)}; \theta_1) \right\} \\
\theta_2^{(t+1)} & = \arg \max \left\{ \frac{1}{m} \sum_{k=1}^m \log 
h( u^{(t,k)}; \theta_2) \right\} \; . \\
\end{split}
\]
\end{enumerate}
The ``target density'' for the Monte Carlo E-step (step 1) is the 
conditional density of the random effects given the data,
\begin{equation}
\label{eqn:h.u.given.y}
h(u | y; \theta) \propto f(y | u; \theta_1) h(u; \theta_2) \; .
\end{equation}
If 
direct simulation from \eqref{eqn:h.u.given.y} is impossible, one might 
resort to a Markov chain Monte Carlo (MCMC) method such as the 
Metropolis-Hastings algorithm.  In this case the sample 
$\left\{ u^{(t,k)} : k = 1, \ldots, m \right\}$ is an ergodic Markov 
chain having $h(u | y; \theta^{(t)})$ as its unique stationary 
density  \citep[see, for example,][]{robe:case:2004}.  
An alternative approach is to compute a quasi-Monte Carlo or 
\emph{randomized quasi-Monte Carlo} \citep{lecu:lemi:2002} approximation 
to the $Q$-function with the goal of reducing Monte Carlo error and hence 
increasing the efficiency of the algorithm.  We will not consider quasi-Monte 
Carlo methods any further in this report; the interested reader is referred 
to \citet{jank:2004}.

\subsection{Example 1: A linear mixed model}

Table \ref{tab:bulls} contains a data set for an experiment described by 
\citet{sned:coch:1989}.  The experiment involved six bulls and very 
many cows.  From each bull, some number of semen samples was 
taken, and each of these samples was used in an attempt to artificially 
inseminate a large number of cows.  Some attempts were successful and some 
were not; let $Y_{ij}$ denote the success rate (percentage of conceptions) 
for sample $j$ from bull $i$, for $j = 1, \ldots, n_i$ and $i = 1, \ldots, 
q=6$; here $N = \sum_{i=1}^q n_i$.  Consider the one-way random effects model
$$
y_{ij} = \mu + u_i + e_{ij}
$$
where $\mu$ is the overall mean, $u_i$ is the $i$th bull effect, and 
$e_{ij}$ is a residual error term.  As the six bulls were a random sample 
from a larger population of bulls, the $u_i$ are modeled as independent 
and identically distributed (i$.$i$.$d$.$) random effects.  Model 
specification is completed by a distribution assumption on the bull effect 
and error term; we take
$$
u_i \sim ~\mathrm{iid}~ \mathrm{Normal} \left(0, \sigma_u^2 \right)\; ;  
~~~ \mathrm{independent}~ \mathrm{of}~~~ e_{ij} \sim ~\mathrm{iid}~ 
\mathrm{Normal} \left(0 , \sigma_e^2 \right) \; .
$$
When there exists a conjugate relationship between $f$ and $h$, as in the 
normal linear mixed model, the integral in \eqref{eqn:likelihood} can be 
solved explicitly.  The resulting log-likelihood can be maximized numerically 
(or analytically in the case of balanced data $n_i \equiv n$); for the bulls 
data we obtain $\hat{\mu} = 53.318$, $\hat{\sigma}_u^2 = 54.821$, and 
$\hat{\sigma}_e^2 = 249.23$.  

\bigskip
\bigskip
\begin{table}[h]
\begin{center}
\begin{tabular}{ l l l }
\hline
Bull ($i$) ~~~~~~~& $n_i$ ~~~~~~~~~~& Percentage of conception \\
\hline
~~~1 & ~5 & ~46, 31, 37, 62, 30 \\
~~~2 & ~2 & ~70, 59 \\
~~~3 & ~7 & ~52, 44, 57, 40, 67, 64, 70 \\
~~~4 & ~5 & ~47, 21, 70, 46, 14 \\
~~~5 & ~7 & ~42, 64, 50, 69, 77, 81, 87 \\
~~~6 & ~9 & ~35, 68, 59, 38, 57, 76, 57, 29, 60 ~~~~~~~\\
 Total & 35 & \\
\hline
\end{tabular}
\caption{\textit{Bovine artificial insemination data of Example 1
         \citep{sned:coch:1989}.}}  
\label{tab:bulls}
\end{center}
\end{table} 

Consider the EM algorithm.  We find it more convenient to work with an 
equivalent version of the model in which $y_{ij} = u_i + e_{ij}$ and the 
$u_i$ are i$.$i$.$d$.$ $\mathrm{Normal}(\mu, \sigma_u^2)$.  Under this 
reparameterization the complete data log-likelihood of 
$\theta = (\mu, \sigma_u^2 , \sigma_e^2)$ is
$$
l_c(\theta; y, u) = - \frac{N}{2} \log( \sigma_e^2 ) - \frac{1}{2 \sigma_e^2} 
\sum_{i=1}^q \sum_{j=1}^{n_i}(y_{ij} - u_i )^2 - \frac{q}{2} \log( \sigma_u^2 ) 
- \frac{1}{2 \sigma_u^2} \sum_{i=1}^q (u_i - \mu)^2 \; .
$$
Owing to the conjugacy it is straightforward to show that
\begin{equation}
\label{eqn:posterior.bulls}
U_i | (Y = y; \theta) ~~i = 1, \ldots, q ~ ~\mathrm{are} ~ \mathrm{indep} 
~~ \mathrm{Normal} \left( \frac{ \sigma_e^2 \mu + n_i \sigma_u^2 \bar{y}_i }
{ \sigma_e^2 + n_i \sigma_u^2 } , \frac{ \sigma_e^2 \sigma_u^2 }
{ \sigma_e^2 + n_i \sigma_u^2 } \right) \; .
\end{equation}
Denote the conditional mean and variance of $U_i$ given $Y=y$ by 
$\hat{u}_i$ and $\hat{V}_i$, respectively.  Then the EM update rule is 
given by
\[
\begin{split}
\mu^{(t+1)}  & =  \frac{1}{q} \sum_{i=1}^q \hat{u}_i^{(t)} \\
\sigma_u^{2^{(t+1)}} & =  \frac{1}{q} \sum_{i=1}^q \left( \hat{V}_i^{(t)} + 
\left[ \hat{u}_i^{(t)} \right]^2 \right) - \left[ \mu^{(t+1)} \right]^2 \\
\sigma_e^{2^{(t+1)}} & =  \frac{1}{N} \sum_{i=1}^q \left[ \sum_{j=1}^{n_i} 
y_{ij}^2 - 2 n_i \bar{y}_i \hat{u}_i^{(t)} + n_i \left( \hat{V}_i^{(t)} 
+ \left[ \hat{u}_i^{(t)} \right]^2 \right) \right] \; . \\
\end{split}
\]
Given the existence of a closed form EM update, there is no practical reason 
to resort to Monte Carlo EM (indeed there was no practical need for EM, as 
we found a closed form expression for the likelihood as well), but we will 
consider MCEM for illustration.  Let $u^{(t,1)}, \ldots, u^{(t,m)}$ denote a 
sequence of simulated draws from $h(u | y; \theta^{(t)})$, given at 
\eqref{eqn:posterior.bulls}.  The MCEM update rule for $\theta^{(t+1)}$ is
\[
\begin{split}
\mu^{(t+1)}  & =  \frac{1}{mq} \sum_{k=1}^m \sum_{i=1}^q u_i^{(t,k)} \\
\sigma_u^{2^{(t+1)}} & =  \frac{1}{mq} \sum_{k=1}^m 
\sum_{i=1}^q \left( u_i^{(t,k)} - \mu^{(t+1)} \right)^2 \\
\sigma_e^{2^{(t+1)}} & =  \frac{1}{mN} \sum_{k=1}^m \sum_{i=1}^q \sum_{j=1}^{n_i} 
 \left( y_{ij} - u_i^{(t,k)} \right)^2  \; . \\
\end{split}
\]
We ran three independent MCEM runs of 20 iterations each, starting at the 
point $(\mu^{(0)}, \sigma_u^{2^{(0)}},\sigma_e^{2^{(0)}} ) = (55, 45, 260)$.  For 
each update we used Monte Carlo sample size $m = 10^4$; results are shown 
in Figure \ref{fig:bulls1}.  The three dashed lines indicate the paths of 
the three MCEM runs, and the solid line shows that of ordinary (deterministic) 
EM.  We did three more runs with starting values closer to the MLE and using 
$m = 10^5$; those results are summarized in Figure \ref{fig:bulls2}. 

\begin{figure}[h]
\begin{center}
 \includegraphics[scale=.5,angle=360]{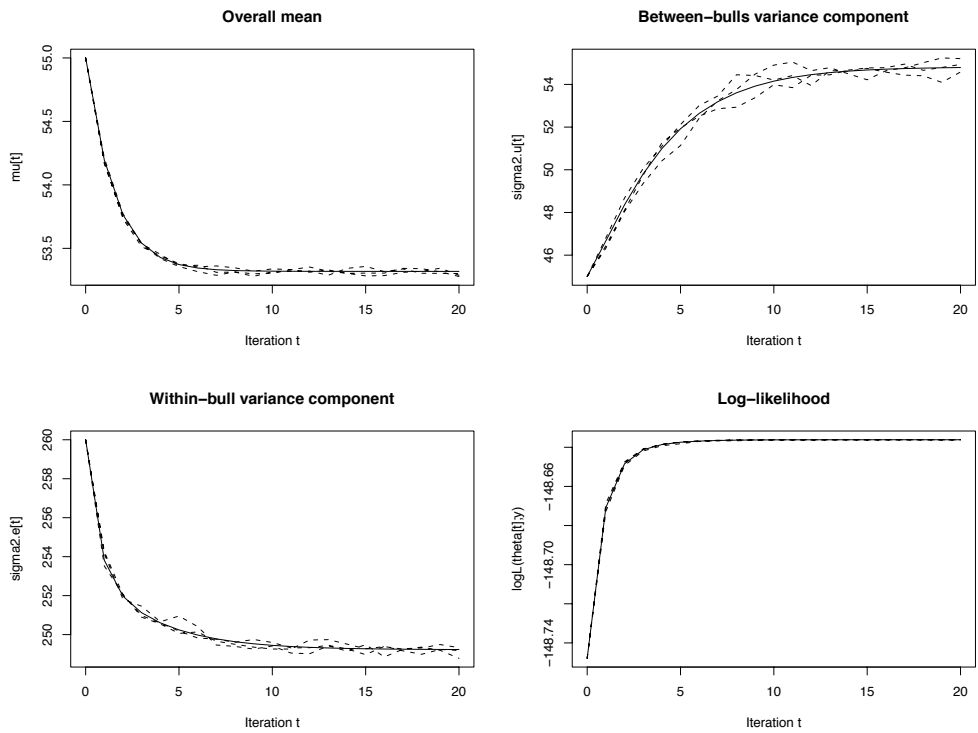}
\end{center}
\caption{\textit{Trace plots for Monte Carlo EM in Example 1, based on 
Monte Carlo sample size $m = 10^4$ at each iteration.  Top left plot is 
overall mean $\mu$, top right and bottom left are variance components 
$\sigma_u^2$ and $\sigma_e^2$, respectively.  Bottom right plot shows 
log-likelihood evaluated at current parameter value.  The solid line 
is deterministic EM and the three dashed lines correspond to three 
independent runs of Monte Carlo EM. }}
\label{fig:bulls1}
\end{figure}
\bigskip

\begin{figure}[h]
\begin{center}
  \includegraphics[scale=.5,angle=360]{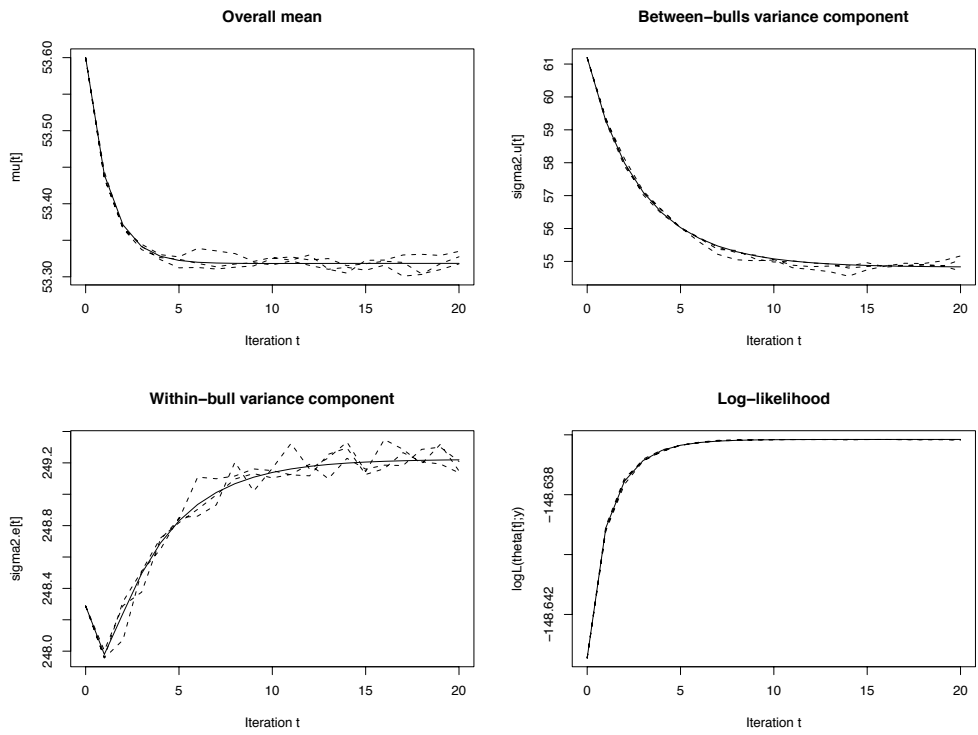}
\end{center}
\caption{\textit{Analogous to Figure \ref{fig:bulls1}, but with 
$m = 10^5$ and starting 
values chosen closer to the true MLE. 
}}
\label{fig:bulls2}
\end{figure}

\FloatBarrier

\subsection{Example 2: A logit-normal generalized linear mixed model}

Let $Y = \left\{ Y_{ij} : j = 1, \ldots, n_i; ~ i = 1, \ldots, q \right\}$ 
denote a set of binary response variables; here again one can think of 
$Y_{ij}$ as the $j$th response for the $i$th subject.  Let $x_{ij}$ be a 
covariate (or vector of covariates) associated with the $i,j$ observation.  
Conditional on the random effects $U = u \in \mathbb{R}^q$, the responses 
are independent $\mathrm{Bernoulli}(\pi_{ij})$ where
$$
\log \left( \frac{ \pi_{ij} }{ 1 - \pi_{ij} } \right) = \beta x_{ij} + u_i 
\; .
$$
Let $U_1, \ldots, U_q$ be independent and identically distributed as 
$\mathrm{Normal}(0, \sigma^2)$.  The likelihood is given by
\[
\begin{split}
L(\beta, \sigma^2; y) & =  \left( \sigma^2 \right)^{-q/2}  \times \\
&  ~~
\int_{ \mathbb{R}^q } \exp \left\{ \sum_{i=1}^q \sum_{j=1}^{n_i} \left[  y_{ij} 
\left( \beta x_{ij} + u_i \right) - \log \left( 1 + e^{ \beta x_{ij} + u_i }
\right) \right] - \frac{1}{2 \sigma^2} \sum_{i=1}^q u_i^2 \right\} du \; . \\
\end{split}
\]
The above model has been used by several authors  
\citep{mccu:1997, boot:hobe:1999, caff:jank:jone:2005} as a benchmark for 
comparing Monte Carlo methods of maximum likelihood.  We consider here a 
data set generated by \citet[][Table 2]{boot:hobe:1999} with $n_i = 15$, 
$q = 10$, and $x_{ij} = j/15$ for each $i,j$.  For these data the MLEs 
are known to be $( \hat{\beta}, \hat{\sigma}^2 ) = (6.132, 1.766)$.  

A version of the 
complete data log-likelihood is given by
$$
l_c(\beta, \sigma^2; y, u) = - \frac{q}{2} \log \left(\sigma^2 \right) 
- \frac{1}{2 \sigma^2} 
\sum_{i=1}^q u_i^2 + \sum_{i=1}^q \sum_{j=1}^{n_i} \left[ \beta x_{ij} y_{ij} 
- \log \left( 1 + e^{\beta x_{ij} + u_i } \right) \right] \; .
$$
To apply the EM algorithm in this problem we would need to compute the 
(conditional) expectation of $l_c$ with respect to the density
\begin{equation}
\label{eqn:benchmark.target}
h(u | y; \theta) \propto \exp \left\{ \sum_{i=1}^q \sum_{j=1}^{n_i} 
\left[y_{ij} u_i - \log \left( 1 + e^{\beta x_{ij} + u_i } \right) 
\right] - \frac{1}{2 \sigma^2} \sum_{i=1}^q u_i^2 \right\} \; .
\end{equation}
Clearly this integral will be intractable.  Thus we consider a Monte 
Carlo EM algorithm, which requires the means to simulate random draws 
from the distribution given by \eqref{eqn:benchmark.target}.  
\citet{mccu:1997} employed a variable-at-a-time Metropolis-Hastings 
independence sampler with $\mathrm{Normal}(0, \sigma^2)$ proposals, 
which \citet{john:jone:neat:2011} have shown is uniformly ergodic.  

Trace plots for three independent runs of MCEM are shown in the 
left hand panels of Figure \ref{fig:benchmark}.  The starting values for 
these runs were $(\beta^{(0)}, \sigma^{2(0)}) = (2,1)$, and we ran 35 
updates with Monte Carlo sample size $m = 10^4$ at each iteration.  We 
conducted three more runs of 25 iterations, starting at 
$(\beta^{(0)}, \sigma^{2(0)}) = (6,2)$, with $m = 10^5$; results are 
shown in the right hand panels of Figure \ref{fig:benchmark}.


\begin{figure}[h]
\begin{center}
  \includegraphics[scale=.5,angle=360]{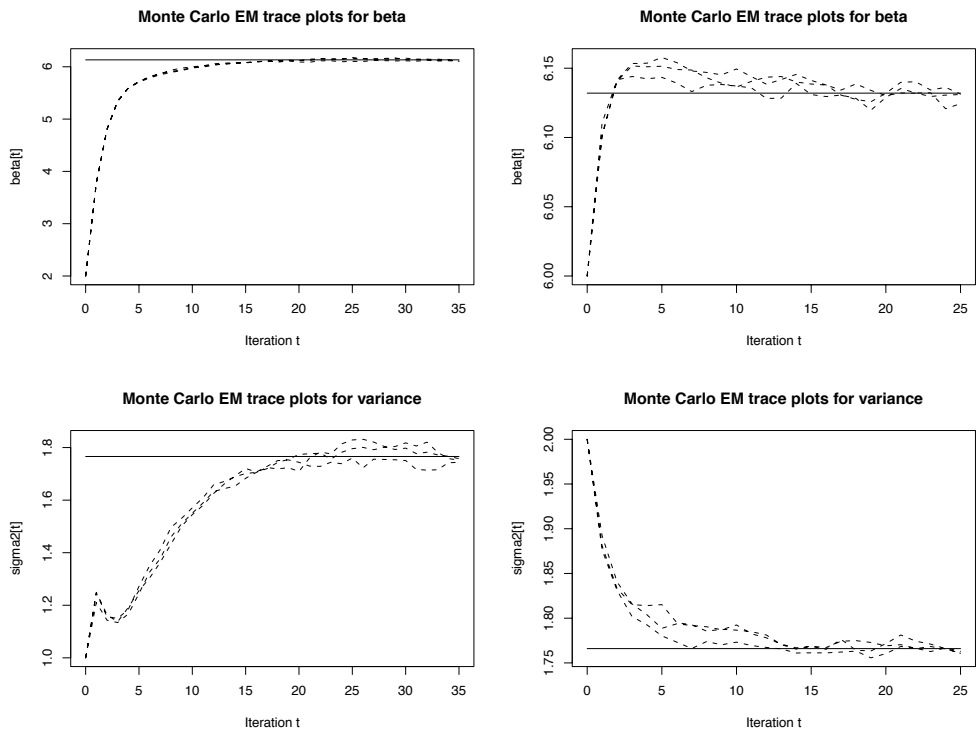}
\end{center}
\caption{\textit{Monte Carlo EM trace plots for logit-normal model of 
Example 2.  Top panels show $\beta$, bottom panels show $\sigma^2$.  
Three dashed lines correspond to three independent runs of MCEM, with solid 
horizontal line drawn at true MLE.  Runs in left hand panels used Monte 
Carlo sample size $m = 10^4$ at each iteration; in right hand panels we 
used $m=10^5$ with starting values closer to the true MLE. }}
\label{fig:benchmark}
\end{figure}



\FloatBarrier

\section{Convergence properties of ordinary EM}
\label{sec:EM}

The basic convergence properties of the EM algorithm were established 
by \citet{boyl:1983} and \citet{wu:1983}.  The presentation given here 
draws heavily from \citet{geye:1998}.   We will show that if an EM 
sequence converges, its limit must be a stationary point of the 
log-likelihood.  We then present conditions that guarantee the 
convergence of EM, with additional conditions that guarantee convergence 
to the MLE.  We conclude this section with a proof that the EM algorithm 
cannot produce a superlinearly convergent sequence.

We begin by proving the \emph{ascent property} of the EM algorithm, which 
guarantees that an EM update will never decrease the value of the likelihood 
function, that is, if $\left\{ \theta^{(t)} \right\}$ is an EM sequence, 
then $l(\theta^{(t+1)}; y) \geq l(\theta^{(t)}; y)$ for each $t$. 

Define 
\begin{equation}
\label{eqn:R}
\begin{split}
R(\theta | \tilde{\theta}; y) & = \mathrm{E} \left\{ \log h(U | y; \theta) 
\big{\vert}~ y; \tilde{\theta} \right\} \\ &  = \mathrm{E} \left\{ \log f(y, U; 
\theta) \big{\vert}~ y; \tilde{\theta} \right\} - 
\mathrm{E} \left\{ \log f(y; \theta) \big{\vert}~ y; \tilde{\theta} \right\}
\\ & = Q(\theta | \tilde{\theta}; y) - l(\theta; y) \; .
\end{split}
\end{equation}
We now show that, for fixed $\tilde{\theta}$, $R(\theta | 
\tilde{\theta}; y)$ attains its maximum at $\theta = \tilde{\theta}$.
\begin{lemma}
\label{lem:R}
For any $\tilde{\theta} \in \Theta$, $R(\tilde{\theta} | \tilde{\theta}; y) 
\geq R(\theta | \tilde{\theta}; y)$ for all $\theta$.  
\end{lemma}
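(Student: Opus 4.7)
The plan is to recognize Lemma~\ref{lem:R} as a version of the classical information (Gibbs) inequality: the expected log-density under a probability measure is maximized when the density evaluated matches the measure itself. Since $R(\theta|\tilde{\theta};y)$ is precisely $\mathrm{E}_{U \sim h(\cdot|y;\tilde{\theta})}\{\log h(U|y;\theta)\}$, the claim is equivalent to the non-negativity of the Kullback--Leibler divergence between $h(\cdot|y;\tilde{\theta})$ and $h(\cdot|y;\theta)$.

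The execution would proceed in three short steps. First, I would form the difference
\[
R(\theta|\tilde{\theta};y) - R(\tilde{\theta}|\tilde{\theta};y)
= \mathrm{E}\!\left\{\log\frac{h(U|y;\theta)}{h(U|y;\tilde{\theta})}\;\Big{|}\;y;\tilde{\theta}\right\},
\]
where the conditional expectation is with respect to the density $h(\cdot|y;\tilde{\theta})$. Second, I would apply Jensen's inequality, using concavity of the logarithm, to move the $\log$ outside the expectation:
\[
\mathrm{E}\!\left\{\log\frac{h(U|y;\theta)}{h(U|y;\tilde{\theta})}\;\Big{|}\;y;\tilde{\theta}\right\}
\leq \log \mathrm{E}\!\left\{\frac{h(U|y;\theta)}{h(U|y;\tilde{\theta})}\;\Big{|}\;y;\tilde{\theta}\right\}.
\]
Third, I would evaluate the expectation on the right by unfolding the definition and canceling $h(u|y;\tilde{\theta})$ against itself, so that the integrand becomes $h(u|y;\theta)$, which integrates to $1$. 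Thus the right-hand side equals $\log 1 = 0$, yielding $R(\theta|\tilde{\theta};y) \leq R(\tilde{\theta}|\tilde{\theta};y)$, which is the desired inequality.

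The only subtlety, and arguably the one place where care is needed, is a measure-theoretic one: the cancellation in the third step is legitimate only on the set where $h(u|y;\tilde{\theta}) > 0$, so one implicitly needs $h(\cdot|y;\theta)$ to be absolutely continuous with respect to $h(\cdot|y;\tilde{\theta})$ on the relevant support, or to allow the inequality to hold trivially (with the right side equal to $-\infty$) when this fails. Under the standard regularity assumption that the family $\{h(\cdot|y;\theta):\theta\in\Theta\}$ shares a common support, this issue evaporates, and Jensen's inequality delivers the lemma essentially in one line. No other obstacle is expected.
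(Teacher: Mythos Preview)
Your proposal is correct and mirrors the paper's own proof almost exactly: form the difference $R(\theta|\tilde{\theta};y)-R(\tilde{\theta}|\tilde{\theta};y)$, apply Jensen's inequality to the log of the density ratio, then integrate the ratio against $h(\cdot|y;\tilde{\theta})$ to obtain $\log 1=0$. The only addition in your write-up is the explicit remark about common support, which the paper leaves implicit.
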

\begin{proof}
\[
R(\theta | \tilde{\theta} ; y) - R(\tilde{\theta} | \tilde{\theta}; y) 
= \mathrm{E} \left\{ \log \left( \frac{h(U | y; \theta)}{h(U | y; 
\tilde{\theta})} \right) \Big{\vert} ~ y; ~\tilde{\theta} \right\} 
\leq \log \left( \mathrm{E} \left\{ \frac{h(U | y; \theta)}{h(U | y; 
\tilde{\theta})} \Big{\vert} ~y; ~\tilde{\theta} \right\} \right)
\]
by the conditional Jensen inequality \citep[see][page 449]{bill:1995}; now
$$
\mathrm{E} \left\{ \frac{h(U | y; \theta)}{h(U | y; 
\tilde{\theta})} \Big{\vert} ~y; ~\tilde{\theta} \right\} = \int 
\frac{h(u | y; \theta)}{h(u | y; \tilde{\theta})} h(u | y; \tilde{\theta}) 
du = \int h(u | y; \theta) du = 1 
$$  
and thus $R(\theta | \tilde{\theta} ; y) - R(\tilde{\theta} | \tilde{\theta} 
; y) \leq \log(1) = 0$.  
\end{proof}
\begin{theorem} 
\label{thm:ascent}
If $Q(\theta | \tilde{\theta}; y) \geq 
Q(\tilde{\theta} | \tilde{\theta}; y)$, then $l(\theta; y) \geq 
l(\tilde{\theta}; y)$.  
If $Q(\theta | \tilde{\theta}; y) > 
Q(\tilde{\theta} | \tilde{\theta}; y)$, then $l(\theta; y) > 
l(\tilde{\theta}; y)$.  
\end{theorem}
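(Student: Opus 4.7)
The plan is to use the decomposition $l = Q - R$ that is immediate from the definition of $R$ in \eqref{eqn:R}, together with Lemma \ref{lem:R}, to rewrite the difference $l(\theta; y) - l(\tilde\theta; y)$ as a sum of a ``$Q$-part'' and an ``$R$-part'' and then check that both are non-negative under the hypothesis.

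More concretely, first I would rearrange the third line of \eqref{eqn:R} to obtain the identity
\[
l(\theta; y) = Q(\theta | \tilde\theta; y) - R(\theta | \tilde\theta; y),
\]
which holds for every $\theta, \tilde\theta \in \Theta$. Applying this identity at both $\theta$ and $\tilde\theta$ (with the same conditioning argument $\tilde\theta$ on the right) yields
\[
l(\theta; y) - l(\tilde\theta; y) = \bigl[ Q(\theta | \tilde\theta; y) - Q(\tilde\theta | \tilde\theta; y) \bigr] - \bigl[ R(\theta | \tilde\theta; y) - R(\tilde\theta | \tilde\theta; y) \bigr].
\]

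Next I would bound each bracket. The first bracket is $\geq 0$ by hypothesis. The second bracket is $\leq 0$ by Lemma \ref{lem:R}, since $\tilde\theta$ maximizes $R(\,\cdot\, | \tilde\theta; y)$; hence its negation contributes a non-negative term. Adding these observations gives $l(\theta; y) - l(\tilde\theta; y) \geq 0$, which is the first claim.

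For the strict version, note that the same decomposition applies: the $R$-bracket is still $\leq 0$ by Lemma \ref{lem:R}, while the $Q$-bracket is now strictly positive by assumption, so their difference is strictly positive. I do not anticipate any real obstacle; the only subtlety is making sure that $R(\theta | \tilde\theta; y)$ and $Q(\theta | \tilde\theta; y)$ are both well-defined (finite) so that the subtraction is legitimate, which is implicit in the definitions \eqref{eqn:Q} and \eqref{eqn:R} already in force.
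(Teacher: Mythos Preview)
Your proposal is correct and is essentially identical to the paper's own proof: the paper also writes $l(\theta;y)-l(\tilde\theta;y) = [Q(\theta|\tilde\theta;y)-Q(\tilde\theta|\tilde\theta;y)] - [R(\theta|\tilde\theta;y)-R(\tilde\theta|\tilde\theta;y)]$ and then invokes Lemma~\ref{lem:R} to drop the $R$-bracket, yielding $l(\theta;y)-l(\tilde\theta;y)\geq Q(\theta|\tilde\theta;y)-Q(\tilde\theta|\tilde\theta;y)$, from which both the non-strict and strict claims follow immediately.
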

\begin{proof}
By \eqref{eqn:R} and Lemma \ref{lem:R},
\[
\begin{split}
l(\theta; y) - l(\tilde{\theta} ; y) & = Q(\theta | \tilde{\theta}; y) 
- Q(\tilde{\theta} | \tilde{\theta} ; y) - \left[ R(\theta | 
\tilde{\theta}; y) - R(\tilde{\theta} | \tilde{\theta} ; y) \right] \\ & 
\geq Q(\theta | \tilde{\theta}; y) - Q(\tilde{\theta} | \tilde{\theta} ; y)
\end{split}
\]
\end{proof}
\noindent The ascent property of EM follows immediately from Theorem 
\ref{thm:ascent}: since $\theta^{(t+1)}$ is chosen to maximize $Q(\theta |
\theta^{(t)}; y)$, it must be that $Q(\theta^{(t+1)} | \theta^{(t)}; y) \geq 
Q(\theta^{(t)} | \theta^{(t)}; y)$ and thus $l(\theta^{(t+1)}; y) \geq 
l(\theta^{(t)}; y)$.  This is an appealing property, as it guarantees that 
an EM update will never take a step in the wrong direction.  Of course, this 
result tells us absolutely nothing about the convergence of an EM sequence.

We now show that if an EM sequence converges, it converges to a stationary 
point of the log-likelihood.  Unless otherwise noted, $\nabla$ will denote 
differentiation with respect to the first argument.  
\begin{theorem}
\label{thm:em.thm2}
Suppose the mapping $(\theta, \tilde{\theta}) \mapsto \nabla Q(\theta 
| \tilde{\theta}; y)$ is jointly continuous.  If $\theta^*$ is the limit of 
an EM sequence $\left\{ \theta^{(t)} \right\}$, then $\nabla l(\theta^*; y) 
= 0$.
\end{theorem}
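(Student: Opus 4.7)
The plan is to exploit the decomposition $l(\theta; y) = Q(\theta \mid \tilde\theta; y) - R(\theta \mid \tilde\theta; y)$ from \eqref{eqn:R}, combined with Lemma \ref{lem:R}, to convert the first-order condition for the M-step into a first-order condition for the log-likelihood. The one-line summary: at any $\tilde\theta$, the gradient identity $\nabla l(\tilde\theta; y) = \nabla Q(\tilde\theta \mid \tilde\theta; y)$ holds, and each EM update forces $\nabla Q(\theta^{(t+1)} \mid \theta^{(t)}; y) = 0$, so passing to the limit via joint continuity pins down $\nabla l(\theta^*; y) = 0$.

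First, I would establish the key identity $\nabla l(\tilde\theta; y) = \nabla Q(\tilde\theta \mid \tilde\theta; y)$. By Lemma \ref{lem:R}, for fixed $\tilde\theta$ the function $\theta \mapsto R(\theta \mid \tilde\theta; y)$ attains its maximum at $\theta = \tilde\theta$, which is an interior point of $\Theta$; assuming enough smoothness to differentiate $R$ in its first argument (inherited from the assumed smoothness of $Q$ and $l$, since $R = Q - l$), this maximum is stationary, giving $\nabla R(\tilde\theta \mid \tilde\theta; y) = 0$. Differentiating the third line of \eqref{eqn:R} in $\theta$ and setting $\theta = \tilde\theta$ then yields $\nabla l(\tilde\theta; y) = \nabla Q(\tilde\theta \mid \tilde\theta; y)$.

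Next, I would use the M-step. Because $\theta^{(t+1)}$ maximizes $Q(\cdot \mid \theta^{(t)}; y)$ over the open set $\Theta$, it is an interior stationary point, so
\[
\nabla Q\bigl(\theta^{(t+1)} \mid \theta^{(t)}; y\bigr) = 0 \quad \text{for every } t.
\]
Since $\theta^{(t)} \to \theta^*$ we also have $\theta^{(t+1)} \to \theta^*$, so the pair $(\theta^{(t+1)}, \theta^{(t)})$ converges to $(\theta^*, \theta^*)$ in $\Theta \times \Theta$. Invoking the hypothesis that $(\theta, \tilde\theta) \mapsto \nabla Q(\theta \mid \tilde\theta; y)$ is jointly continuous, I can pass to the limit to obtain $\nabla Q(\theta^* \mid \theta^*; y) = 0$. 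Combining this with the identity from the previous step evaluated at $\tilde\theta = \theta^*$ gives $\nabla l(\theta^*; y) = 0$.

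The only delicate point is the derivation of the identity $\nabla l = \nabla Q$ at the diagonal, which implicitly needs $R(\cdot \mid \tilde\theta; y)$ to be differentiable at its maximizer $\tilde\theta$; under the paper's standing regularity on $f$ and $Q$ this is routine, but it must be flagged. The rest is an application of joint continuity to a convergent sequence of stationary points, which is straightforward once the identity is in hand.
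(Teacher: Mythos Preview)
Your proof is correct and follows the same overall architecture as the paper: derive $\nabla Q(\theta^{(t+1)}\mid\theta^{(t)};y)=0$ from the M-step, pass to the limit via joint continuity to get $\nabla Q(\theta^*\mid\theta^*;y)=0$, and then invoke the diagonal identity $\nabla l(\tilde\theta;y)=\nabla Q(\tilde\theta\mid\tilde\theta;y)$ (equivalently $\nabla R(\tilde\theta\mid\tilde\theta;y)=0$). The only difference is in how that last identity is justified: you obtain $\nabla R(\tilde\theta\mid\tilde\theta;y)=0$ as the first-order condition at the maximizer guaranteed by Lemma~\ref{lem:R}, whereas the paper computes it directly as a score identity, writing $\nabla R(\theta\mid\theta;y)=\int [\partial_\theta h(u\mid y;\theta)]\,du=\partial_\theta(1)=0$. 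Your route is slightly more economical since it recycles Lemma~\ref{lem:R}; the paper's route is more explicit but tacitly requires differentiation under the integral sign. Both are valid under the standing regularity assumptions, and you were right to flag the differentiability of $R$ as the one point needing care.
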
  
\begin{proof}
Since $\theta^{(t+1)}$ maximizes $Q(\theta | \theta^{(t)}; y)$ at each $t$ we 
have $\nabla Q(\theta^{(t+1)} | \theta^{(t)}; y) = 0$ at each $t$.  By the 
continuity assumption $\nabla Q(\theta^{(t+1)} | \theta^{(t)}; y) \rightarrow 
\nabla Q(\theta^* | \theta^*; y)$ as $t \rightarrow \infty$ and thus 
$\nabla Q(\theta^* | \theta^*; y) = 0$.  
Let $R$ be as defined at \eqref{eqn:R}, and note
\[
\begin{split}
\nabla R(\theta | \theta; y) & = \int \left[ \frac{\partial}{\partial \theta}
\log h(u | y; \theta) \right] h(u | y; \theta) du  \\
 & = \int \frac{ \frac{\partial}{\partial \theta} h(u | y; \theta)} 
 { h(u | y; \theta) } h(u | y; \theta) du \\ 
 & = \frac{\partial}{\partial \theta} \int h(u | y; \theta) du 
 = \frac{\partial}{\partial \theta} ( 1 ) = 0 \; .
\end{split}
\]
It then follows from \eqref{eqn:R} that
$$
\nabla l(\theta^*; y) = \nabla Q(\theta^* | \theta^*; y) = 0 \; .
$$  
\end{proof}
From Theorem \ref{thm:em.thm2} we have that \emph{if} the EM algorithm 
converges, it converges to a stationary point of $l$; we as yet have no 
guarantee that EM converges.  By the ascent property, the limit of an EM 
sequence (if it exists) cannot be a local minimum.  It can, however, be 
a local but not global maximum \citep[][cites several examples]{wu:1983} 
or a saddlepoint \citep[][gives an example]{murr:1977}.  

We will now specify conditions that do guarantee the convergence of the 
EM algorithm.  We define a generalized EM sequence as one in which each 
update increases the $Q$-function, but does not necessarily maximize it.
\begin{definition}
A \emph{generalized EM (GEM) sequence} is a sequence of parameter values 
$\left\{ \theta^{(t)} \right\}$ satisfying $Q(\theta^{(t+1)} | \theta^{(t)}; y) 
\geq Q(\theta^{(t)} | \theta^{(t)}; y)$ for each $t$.
\end{definition}
It is immediately clear from Theorem \ref{thm:ascent} that a GEM sequence 
enjoys the ascent property $l(\theta^{(t+1)}; y) \geq l(\theta^{(t)}; y)$.  
The conclusion of Theorem \ref{thm:em.thm2}, that the limit of an EM 
sequence (if it exists) must be a stationary point of $l$, does not hold 
for GEM without additional assumptions.

Consider a sequence of parameter values $\left\{ \theta^{(t)} \right\}$ 
satisfying $\theta^{(t+1)} \in M(\theta^{(t)})$ for some point-to-set 
mapping $M$.  For example, a GEM sequence can be formulated in this 
manner by taking $M(\tilde{\theta}) = \left\{ \theta : 
Q(\theta | \tilde{\theta}; y) \geq Q(\tilde{\theta} | 
\tilde{\theta}; y) \right\}$.  We will indicate a point-to-set mapping 
$M$ in $\Theta$ by the notation $M : \Theta \rightrightarrows \Theta$.  
\begin{definition}
The point-to-set mapping $M: \Theta \rightrightarrows \Theta$ is 
\emph{outer semicontinuous} if the graph of $M$, $$\left\{ (\theta, 
\tilde{\theta}) \in \Theta \times \Theta : \theta \in M(\tilde{\theta}) 
\right\}$$ is a closed set; that is, if for any convergent sequence 
$\left\{ (\theta^{(t)}, \tilde{\theta}^{(t)}) \right\}$ satisfying 
$\theta^{(t)} \in M(\tilde{\theta}^{(t)} )$ for each $t$, the limit 
$(\theta^*, \tilde{\theta}^*)$ satisfies $\theta^* \in M(\tilde{\theta}^*)$.  
\end{definition}
The following theorem gives a set of conditions under which every cluster 
point of a GEM sequence lies in a particular set $\Gamma \subset \Theta$.
\begin{theorem}
\label{thm:em.thm3}
Let $\Gamma \subset \Theta$ and $M : \Theta \rightrightarrows \Theta$ be such 
that the following conditions hold.
\begin{enumerate}
\item $M(\tilde{\theta}) \subset \left\{ \theta : Q(\theta | \tilde{\theta}; 
y ) \geq Q( \tilde{\theta} | \tilde{\theta}; y) \right\}$ when $\tilde{\theta} 
\in \Gamma$.
\item $M(\tilde{\theta}) \subset \left\{ \theta : Q(\theta | \tilde{\theta}; 
y ) > Q( \tilde{\theta} | \tilde{\theta}; y) \right\}$ when $\tilde{\theta} 
\in \Theta ~\backslash~ \Gamma$.
\item The restriction of $M$ to $\Theta ~\backslash~ \Gamma$ is outer 
semicontinuous.
\end{enumerate}
Further suppose that the log-likelihood $l$ is continuous, that the level 
set \\ $\left\{ \theta : l(\theta; y) \geq l(\theta^{(0)}; y) \right\}$ is 
compact, and let the sequence $\left\{ \theta^{(t)} : t = 0, 1, 2, \ldots 
\right\}$ be such that $\theta^{(t+1)} \in M(\theta^{(t)})$ for each $t$.  
Then $l(\theta^{(t)}; y)$ converges to a limit, and every cluster point of 
$\left\{ \theta^{(t)} \right\}$ is contained in $\Gamma$.  
\end{theorem}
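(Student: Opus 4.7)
The plan is to first establish convergence of the log-likelihood values along the iterates, and then argue by contradiction that no cluster point can lie outside $\Gamma$, using outer semicontinuity to propagate strict ascent past a putative bad cluster point.

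First I would observe that conditions 1 and 2 together ensure $Q(\theta^{(t+1)} | \theta^{(t)}; y) \geq Q(\theta^{(t)} | \theta^{(t)}; y)$ regardless of whether $\theta^{(t)} \in \Gamma$, so Theorem \ref{thm:ascent} yields $l(\theta^{(t+1)}; y) \geq l(\theta^{(t)}; y)$ for every $t$. Hence the whole sequence lies in the compact level set $\{\theta : l(\theta; y) \geq l(\theta^{(0)}; y)\}$, on which continuity of $l$ implies $l$ is bounded above. A bounded monotone sequence converges, so $l(\theta^{(t)}; y) \to l^*$ for some finite $l^*$, giving the first assertion.

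For the second assertion, let $\theta^*$ be any cluster point of $\{\theta^{(t)}\}$, say $\theta^{(t_k)} \to \theta^*$; by continuity of $l$ we have $l(\theta^*; y) = l^*$. Assume for contradiction that $\theta^* \in \Theta \setminus \Gamma$. Because the successors $\theta^{(t_k + 1)}$ also lie in the compact level set, I can pass to a further subsequence (without relabeling) along which $\theta^{(t_k + 1)} \to \theta^{**}$ for some $\theta^{**} \in \Theta$. Outer semicontinuity of the restriction of $M$ to $\Theta \setminus \Gamma$, applied to the pairs $(\theta^{(t_k+1)}, \theta^{(t_k)})$ whose second coordinates lie in $\Theta \setminus \Gamma$ for large $k$ since $\theta^* \notin \Gamma$, then gives $\theta^{**} \in M(\theta^*)$. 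Condition 2 produces the \emph{strict} inequality $Q(\theta^{**} | \theta^*; y) > Q(\theta^* | \theta^*; y)$, and the strict half of Theorem \ref{thm:ascent} upgrades this to $l(\theta^{**}; y) > l(\theta^*; y) = l^*$. On the other hand, $l(\theta^{(t_k+1)}; y) \to l^*$ as a subsequence of a convergent sequence, while continuity of $l$ also forces $l(\theta^{(t_k+1)}; y) \to l(\theta^{**}; y)$, so $l(\theta^{**}; y) = l^*$, contradicting the strict inequality. Hence every cluster point lies in $\Gamma$.

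The main obstacle is the outer-semicontinuity step: I must justify that for large $k$ the second coordinate $\theta^{(t_k)}$ of the graph point lies in $\Theta \setminus \Gamma$, so that the restricted outer semicontinuity hypothesis genuinely applies. Under the natural convention that $\Gamma$ is closed in $\Theta$ (which holds in the intended applications, where $\Gamma$ is the set of stationary points of a continuously differentiable likelihood), this follows immediately from $\theta^* \notin \Gamma$ and convergence of $\theta^{(t_k)}$ to $\theta^*$. Otherwise one would split into cases according to whether $\{\theta^{(t_k)}\}$ visits $\Gamma$ infinitely often and dispose of the $\Gamma$-visiting subsequence separately — a technicality rather than a substantive difficulty, since the core of the argument remains the strict-ascent contradiction carried by conditions 2 and 3.
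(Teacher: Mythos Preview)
Your proof is correct and follows essentially the same route as the paper's: establish monotone convergence of $l(\theta^{(t)};y)$ via the ascent property on the compact level set, then derive a contradiction from a hypothetical cluster point $\theta^*\notin\Gamma$ by passing to a convergent subsequence of successors, invoking outer semicontinuity to place the limit $\theta^{**}$ in $M(\theta^*)$, and using condition~2 together with the strict half of Theorem~\ref{thm:ascent}. If anything you are more scrupulous than the paper, which simply writes ``$\theta^{**}\in M(\theta^*)$ by the outer semicontinuity of $M$'' without pausing over whether the tail $\theta^{(t_k)}$ lies in $\Theta\setminus\Gamma$; your closing paragraph flags exactly this technicality.
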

\begin{proof}
By assumption the log-likelihood is bounded above.  Also, $\left\{ 
\theta^{(t)} \right\}$ is a GEM sequence, hence $l(\theta^{(t)}; y)$ is 
nondecreasing, so it converges to a limit $\lambda$.  

Suppose to get a contradiction there exists a subsequence 
$\theta^{(t_k)} \rightarrow \theta^* \notin \Gamma$.  Consider the subsequence 
$\left\{ \theta^{(t_k+1)} \right\}$.  By the ascent property $l(\theta^{(t_k + 1)}; 
y) \geq l(\theta^{(0)}; y)$ for each $k$, so the compactness assumption 
guarantees that $\left\{ \theta^{(t_k+1)} \right\}$ has a convergent 
subsequence with limit $\theta^{**}$.  Further, $\theta^{**} \in 
M(\theta^*)$ by the outer semicontinuity of $M$, and thus $Q(\theta^{**} | 
\theta^*; y) > Q(\theta^* | \theta^*; y)$ and thus 
$l(\theta^{**}; y) > l(\theta^*; y)$ by assumption 2 and Theorem 
\ref{thm:ascent}, respectively.  But $l(\theta^{**}; y) = \lambda = 
l(\theta^*; y)$ by continuity of $l$, a contradiction.  

Thus all cluster points of $\left\{ \theta^{(t)} \right\}$ are in $\Gamma$.  
\end{proof}
In the obvious application of Theorem \ref{thm:em.thm3} the solution set 
$\Gamma$ is taken to be the set of stationary points of the log-likelihood.  
We now have a set of conditions under which the EM algorithm is guaranteed 
to converge to the unique MLE $\hat{\theta}$.
\begin{corollary}
\label{cor:unique.mle}
If the conditions of Theorem \ref{thm:em.thm3} hold and the set $\Gamma$ 
consists of a single point $\hat{\theta}$, then the sequence $\left\{ 
\theta^{(t)} \right\}$ converges to $\hat{\theta}$.  
\end{corollary}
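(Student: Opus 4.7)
The plan is to combine the two conclusions of Theorem \ref{thm:em.thm3}, the ascent property, and the compactness hypothesis, and then invoke the standard fact that a sequence in a compact set with a unique cluster point must converge to that cluster point.

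First I would observe that by the ascent property (a consequence of Theorem \ref{thm:ascent}, applicable since $\{\theta^{(t)}\}$ is in particular a GEM sequence under the hypotheses of Theorem \ref{thm:em.thm3}), we have $l(\theta^{(t)}; y) \geq l(\theta^{(0)}; y)$ for all $t$. Hence the entire sequence lies in the level set $\{\theta : l(\theta; y) \geq l(\theta^{(0)}; y)\}$, which is compact by assumption. Next I would apply Theorem \ref{thm:em.thm3} to conclude that every cluster point of $\{\theta^{(t)}\}$ lies in $\Gamma = \{\hat{\theta}\}$, so the only possible cluster point is $\hat{\theta}$ itself.

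To finish, I would argue by contradiction: suppose $\theta^{(t)} \not\to \hat{\theta}$. Then there exists $\varepsilon > 0$ and a subsequence $\{\theta^{(t_k)}\}$ with $\|\theta^{(t_k)} - \hat{\theta}\| \geq \varepsilon$ for every $k$. Because this subsequence still lies in the compact level set, it admits a further subsequence $\{\theta^{(t_{k_j})}\}$ converging to some $\theta^{\star}$, which is then a cluster point of the original sequence. By the preceding paragraph $\theta^{\star} = \hat{\theta}$, contradicting $\|\theta^{(t_{k_j})} - \hat{\theta}\| \geq \varepsilon$ for all $j$. Therefore $\theta^{(t)} \to \hat{\theta}$.

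There is no real obstacle here; the substantive work was already done in Theorem \ref{thm:em.thm3}. The only thing to be careful about is making explicit that the sequence sits in a compact set (so that the ``unique cluster point implies convergence'' argument is valid), which follows immediately from the ascent property together with the level-set compactness assumption carried over from Theorem \ref{thm:em.thm3}.
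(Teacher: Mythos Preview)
Your proof is correct. The paper does not actually supply a proof of this corollary; it is stated immediately after Theorem~\ref{thm:em.thm3} as an obvious consequence. Your argument---showing the sequence lives in the compact level set via the ascent property, invoking Theorem~\ref{thm:em.thm3} to identify $\hat{\theta}$ as the unique cluster point, and then using the standard compactness argument that a sequence in a compact set with a single cluster point must converge---is exactly the natural way to fill in the details the paper leaves implicit.
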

Unfortunately, these conditions can be difficult or impossible to verify 
in many practical applications.  Further, the rate of convergence of the 
EM algorithm cannot be superlinear, as we show here.  
\begin{definition}
The sequence $\left\{ \theta^{(t)} \right\}$ converging to $\hat{\theta}$ 
is said to converge \emph{superlinearly} if 
$$
\theta^{ (t+1) } - \hat{\theta} = o \left( || \theta^{(t)} - 
\hat{\theta} || \right)
$$
as $t \rightarrow \infty$, where $|| \cdot ||$ denotes the standard 
Euclidean norm.
\end{definition}
\begin{lemma}
\label{lemma:em.gradient}
Suppose the log-likelihood is twice continuously differentiable with a 
local maximum at $\hat{\theta}$ and suppose that $\nabla^2 l(\hat{\theta}; 
y)$ is nonsingular and negative definite.  Further supose that 
$\nabla^2 Q( \hat{\theta} | \hat{\theta}; y)$ is nonsingular and negative 
definite and $\nabla^2 Q(\hat{\theta} | \hat{\theta};y) - \nabla^2 
l(\hat{\theta}; y)$ is nonsingular.  
Define the sequence $\left\{ \theta^{(t)} \right\}$ by
\begin{equation}
\label{eqn:emg}
\theta^{(t+1)} = \theta^{(t)} - \left[ \nabla^2 Q(\theta^{(t)} | 
\theta^{(t)}; y) \right]^{-1} \nabla Q(\theta^{(t)} | \theta^{(t)}; y)
\end{equation}
and suppose that $\theta^{(t)} \rightarrow \hat{\theta}$.  Then the 
convergence is not superlinear.
\end{lemma}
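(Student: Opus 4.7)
The plan is to linearize the update rule around $\hat\theta$ and read off the asymptotic contraction factor, then show that the hypothesis that $\nabla^2 Q(\hat\theta|\hat\theta;y) - \nabla^2 l(\hat\theta;y)$ is nonsingular forces that factor to be bounded away from zero, ruling out superlinear convergence.

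First I would invoke a key identity which is essentially already established in the proof of Theorem \ref{thm:em.thm2}: for every $\tilde\theta \in \Theta$,
\[
\nabla Q(\tilde\theta | \tilde\theta; y) = \nabla l(\tilde\theta; y),
\]
since $Q = l + R$ and $\nabla R(\tilde\theta | \tilde\theta; y) = 0$. Write $e_t = \theta^{(t)} - \hat\theta$. Because $l$ is twice continuously differentiable and $\nabla l(\hat\theta; y) = 0$, a first-order Taylor expansion of $\nabla l$ about $\hat\theta$ gives
\[
\nabla Q(\theta^{(t)} | \theta^{(t)}; y) = \nabla l(\theta^{(t)}; y) = \nabla^2 l(\hat\theta; y)\, e_t + o(\|e_t\|).
\]

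Next I would use (joint) continuity of $\nabla^2 Q$ at $(\hat\theta,\hat\theta)$ together with the nonsingularity of $\nabla^2 Q(\hat\theta|\hat\theta; y)$ to conclude that for $t$ large enough $\nabla^2 Q(\theta^{(t)}|\theta^{(t)}; y)$ is invertible and its inverse converges to $A := \nabla^2 Q(\hat\theta|\hat\theta; y)^{-1}$. Substituting into the update rule \eqref{eqn:emg},
\[
e_{t+1} = e_t - A\bigl[\nabla^2 l(\hat\theta; y)\, e_t + o(\|e_t\|)\bigr] + o(\|e_t\|) = M e_t + o(\|e_t\|),
\]
where
\[
M := I - \nabla^2 Q(\hat\theta|\hat\theta; y)^{-1}\,\nabla^2 l(\hat\theta; y) = \nabla^2 Q(\hat\theta|\hat\theta; y)^{-1}\bigl[\nabla^2 Q(\hat\theta|\hat\theta; y) - \nabla^2 l(\hat\theta; y)\bigr].
\]
The right-hand factorization shows $M$ is the product of two matrices each nonsingular by hypothesis, so $M$ is nonsingular. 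Hence there is a constant $c > 0$ (the reciprocal of $\|M^{-1}\|$) with $\|M v\| \ge c\|v\|$ for all $v$.

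Finally, I would combine these to obtain
\[
\frac{\|e_{t+1}\|}{\|e_t\|} \ge \frac{\|M e_t\|}{\|e_t\|} - \frac{o(\|e_t\|)}{\|e_t\|} \ge c - o(1),
\]
so $\limsup_t \|e_{t+1}\|/\|e_t\| \ge c > 0$, contradicting the superlinear condition $e_{t+1} = o(\|e_t\|)$. The only delicate step I expect is the transfer from $\nabla^2 Q(\theta^{(t)}|\theta^{(t)}; y)^{-1}$ to its limit $A$ inside the error analysis — one has to check that replacing the former by the latter only introduces a term of order $o(\|e_t\|)$, which is valid because the difference is $o(1)$ while it multiplies a quantity of order $O(\|e_t\|)$; this implicitly requires joint continuity of $\nabla^2 Q$, which fits with the continuity assumption used throughout Section \ref{sec:EM}.
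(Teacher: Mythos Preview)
Your proof is correct, but it proceeds by a genuinely different route than the paper's.  The paper argues by contradiction via the Dennis--Mor\'{e} characterization theorem: it extracts a convergent subsequence of the normalized gradients $\nabla l(\theta^{(t)};y)/\|\nabla l(\theta^{(t)};y)\|$ on the unit sphere, and notes that superlinear convergence would force the EM gradient step and the Newton--Raphson step (both normalized the same way) to have the same limit, which leads to $[\nabla^2 Q(\hat\theta|\hat\theta;y) - \nabla^2 l(\hat\theta;y)]c = 0$ for a nonzero vector $c$.  You instead linearize the update map directly at $\hat\theta$, obtaining $e_{t+1} = M e_t + o(\|e_t\|)$ with $M = I - \nabla^2 Q(\hat\theta|\hat\theta;y)^{-1}\nabla^2 l(\hat\theta;y)$, and then observe that the hypotheses make $M$ nonsingular so that $\|e_{t+1}\|/\|e_t\|$ is bounded below.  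Your approach is more elementary in that it avoids invoking Dennis--Mor\'{e} altogether, and it is arguably more informative: the matrix $M$ you isolate is exactly the ``rate matrix'' (the missing-information fraction of Dempster, Laird and Rubin) whose spectral radius governs the actual linear rate of EM.  The paper's route, by contrast, ties the result explicitly to the Newton--Raphson benchmark, which fits its broader narrative about EM gradient being asymptotically equivalent to EM.  Both proofs tacitly use joint continuity of $\nabla^2 Q$ at $(\hat\theta,\hat\theta)$, which you correctly flag.
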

\begin{proof}
Let $\delta_{NR}$ denote the Newton-Raphson update increment for the 
optimization of $l$, that is, if $\left\{ \theta'^{(t)} \right\}$ is a 
Newton-Raphson sequence then $\theta'^{(t+1)} = \theta'^{(t)} + 
\delta_{NR}(\theta'^{(t)})$ for each $t$, or
\begin{equation*}
\delta_{NR}(\theta) = - \left[ \nabla^2 l(\theta; y) \right]^{-1} 
\nabla l(\theta; y) \; .
\end{equation*}
Since $\nabla^2 l(\theta; y)$ is continuous and $\nabla^2 l(\hat{\theta};y)$ 
is nonsingular, it must be that $\nabla^2 l(\theta; y)$ is invertible in a 
neighborhood of $\hat{\theta}$, and thus $\delta_{NR} ( \theta^{(t)} )$ is 
well-defined for sufficiently large $t$.  

By convergence of $\left\{ \theta^{(t)} \right\}$ and the continuity of 
$\nabla l$, $\nabla l(\theta^{(t)}; y) \rightarrow \nabla l(\hat{\theta}; 
y) = 0$.   Together with the continuity of $\nabla^2 l(\theta; y)$, this 
guarantees that
$$
\delta_{NR}( \theta^{(t)} ) = - \left[ \nabla^2 l( \theta^{(t)}; y) 
\right]^{-1} \nabla l(\theta^{(t)}; y) \rightarrow \left[ \nabla^2 
l(\hat{\theta}; y) \right]^{-1} \cdot 0 = 0
$$
as $t \rightarrow \infty$.  Now, consider the sequence $\left\{ \nabla 
l(\theta^{(t)}; y) / || \nabla l(\theta^{(t)}; y) || \right\}$.  This 
sequence lives on the unit sphere, a compact set, and hence has a convergent 
subsequence.  Let $\left\{t_k \right\}$ denote the indices of a convergent 
subsequence and $b$ its limit.  Then
\begin{equation}
\label{eqn:subseq1}
\frac{ \theta^{(t_k+1)} - \theta^{(t_k)} }{ || \nabla l(\theta^{(t_k)}; y) 
|| } = \frac{ - \left[ \nabla^2 Q(\theta^{(t_k)} | \theta^{(t_k)}; y) 
\right]^{-1} \nabla l(\theta^{(t_k)}; y) }{ || \nabla l(\theta^{(t_k)}; y) 
|| } \rightarrow - \left[ \nabla^2 Q(\hat{\theta} | \hat{\theta}; y) 
\right]^{-1} b
\end{equation}
and
\begin{equation}
\label{eqn:subseq2}
\frac{ \delta_{NR} ( \theta^{(t_k)}) }{ || \nabla l(\theta^{(t_k)}; y) 
|| } = \frac{ - \left[ \nabla^2 l(\theta^{(t_k)} ; y) 
\right]^{-1} \nabla l(\theta^{(t_k)}; y) }{ || \nabla l(\theta^{(t_k)}; y) 
|| } \rightarrow - \left[ \nabla^2 l(\hat{\theta}  y) 
\right]^{-1} b
\end{equation}
as $k \rightarrow \infty$.  The equality in \eqref{eqn:subseq1} follows 
from the fact that $\nabla Q(\theta | \theta; y) = \nabla l(\theta; y)$ 
for any $\theta$.  

Suppose the sequence $\left\{ \theta^{(t)} \right\}$ does converge 
superlinearly.  Then it is asymptotically equivalent to Newton-Raphson 
by the Dennis-Mor\'{e} characterization theorem 
\citep[see, for example,][]{flet:1987}, and thus the (sub)sequences defined 
in \eqref{eqn:subseq1} and \eqref{eqn:subseq2} must have the same limit.  
Then
$
\left[ \nabla^2 Q(\hat{\theta} | \hat{\theta}; y) \right]^{-1} b = 
\left[ \nabla^2 l(\hat{\theta} ; y) \right]^{-1} b = c$.  
So
$$
\left[ \nabla^2 Q(\hat{\theta} | \hat{\theta}; y) - 
 \nabla^2 l(\hat{\theta} ; y) \right] c = 0
$$
and thus $c = 0$ since $\nabla^2 Q(\hat{\theta} | \hat{\theta}; y) - 
 \nabla^2 l(\hat{\theta} ; y)$ is full rank.  But $b$ must be on the 
unit sphere, a contradiction.

Thus the convergence of $\left\{ \theta^{(t)} \right\}$ to 
$\hat{\theta}$ is not superlinear.  
\end{proof}
The algorithm defined at \eqref{eqn:emg}, with 
update rule given by 
a single Newton-Raphson iteration toward the 
maximum of the $Q$-function, was first introduced by \citet{lang:1995} 
and is known as the {\em EM gradient algorithm}.  Details are beyond the 
scope of this report, but roughly speaking, the convergence properties of 
the EM algorithm are equally enjoyed by \pcite{lang:1995} EM gradient 
algorithm.  Thus while Lemma \ref{lemma:em.gradient} takes the convergence 
of the EM gradient sequence as a given, there is no sacrifice in the 
applicability of the result, as the EM gradient converges to a local 
maximum under essentially the same conditions as does the EM algorithm.
\begin{theorem}
Suppose the EM sequence $\left\{ \theta^{(t)} \right\}$ converges to a point 
$\theta^* \in \Theta$, a stationary point of the log-likelihood.  Further 
suppose that $l(\theta; y)$, $Q(\theta | \tilde{\theta}; y)$, and 
$R(\theta | \tilde{\theta}; y)$ are twice continuously differentiable in 
$\theta$ and that $\nabla^2 l(\theta^*; y)$, $\nabla^2 Q(\theta^* | 
\theta^*; y)$, and $\nabla^2 R(\theta^* | \theta^*; y)$ have full rank.  
Then the convergence cannot be superlinear.  
\end{theorem}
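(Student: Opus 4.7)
The plan is to reduce the statement to Lemma~\ref{lemma:em.gradient} by showing that the EM step admits an exact representation of the same form as the EM gradient step, with a matrix whose inverse tends to $[\nabla^2 Q(\theta^* \mid \theta^*; y)]^{-1}$. Since $\Theta$ is open and $\theta^{(t)} \to \theta^* \in \Theta$, for large $t$ the maximizer $\theta^{(t+1)}$ of $Q(\cdot \mid \theta^{(t)}; y)$ lies in the interior, so $\nabla Q(\theta^{(t+1)} \mid \theta^{(t)}; y) = 0$. Writing $\Delta_t = \theta^{(t+1)} - \theta^{(t)}$ and integrating $\nabla^2 Q(\cdot \mid \theta^{(t)}; y)$ along the segment from $\theta^{(t)}$ to $\theta^{(t+1)}$ gives
\begin{equation*}
0 = \nabla Q(\theta^{(t)} \mid \theta^{(t)}; y) + B_t\, \Delta_t, \qquad B_t := \int_0^1 \nabla^2 Q(\theta^{(t)} + s\Delta_t \mid \theta^{(t)}; y)\,ds.
\end{equation*}
Twice continuous differentiability of $Q$ and $\theta^{(t+1)}, \theta^{(t)} \to \theta^*$ force $B_t \to \nabla^2 Q(\theta^* \mid \theta^*; y)$, which is nonsingular by hypothesis, so $B_t$ is invertible for all large $t$. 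Combined with the identity $\nabla Q(\theta \mid \theta; y) = \nabla l(\theta; y)$ (as used in the proof of Theorem~\ref{thm:em.thm2}), this yields the exact linear representation
\begin{equation*}
\theta^{(t+1)} - \theta^{(t)} = -B_t^{-1}\, \nabla l(\theta^{(t)}; y),
\end{equation*}
which is of the same form as the EM gradient update but with $B_t$ in place of $\nabla^2 Q(\theta^{(t)} \mid \theta^{(t)}; y)$.

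With this representation in hand I would replay the argument of Lemma~\ref{lemma:em.gradient} almost verbatim. Since $\theta^*$ is a stationary point of $l$ with $\nabla^2 l(\theta^*; y)$ nonsingular, $\theta^*$ is isolated among zeros of $\nabla l$; excluding the trivial case in which $\theta^{(t)} = \theta^*$ from some stage on, we have $\nabla l(\theta^{(t)}; y) \neq 0$ for all sufficiently large $t$. Extract a subsequence $\{t_k\}$ along which the unit vectors $\nabla l(\theta^{(t_k)}; y)/\|\nabla l(\theta^{(t_k)}; y)\|$ converge to some $b$ on the unit sphere. Then, exactly as in \eqref{eqn:subseq1}--\eqref{eqn:subseq2},
\begin{equation*}
\frac{\theta^{(t_k+1)} - \theta^{(t_k)}}{\|\nabla l(\theta^{(t_k)}; y)\|} \longrightarrow -\bigl[\nabla^2 Q(\theta^* \mid \theta^*; y)\bigr]^{-1} b, \qquad \frac{\delta_{NR}(\theta^{(t_k)})}{\|\nabla l(\theta^{(t_k)}; y)\|} \longrightarrow -\bigl[\nabla^2 l(\theta^*; y)\bigr]^{-1} b.
\end{equation*}

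If $\{\theta^{(t)}\}$ converged to $\theta^*$ superlinearly, the Dennis--Mor\'e characterization (invoked exactly as in Lemma~\ref{lemma:em.gradient}) would force these two subsequential limits to coincide, producing a vector $c := [\nabla^2 Q(\theta^* \mid \theta^*; y)]^{-1} b = [\nabla^2 l(\theta^*; y)]^{-1} b$. The identity $R = Q - l$ gives $\nabla^2 R(\theta \mid \tilde{\theta}; y) = \nabla^2 Q(\theta \mid \tilde{\theta}; y) - \nabla^2 l(\theta; y)$ for the second derivative in the first argument, so
\begin{equation*}
\nabla^2 R(\theta^* \mid \theta^*; y)\, c = \bigl[\nabla^2 Q(\theta^* \mid \theta^*; y) - \nabla^2 l(\theta^*; y)\bigr] c = 0,
\end{equation*}
and full rank of $\nabla^2 R(\theta^* \mid \theta^*; y)$ forces $c = 0$, hence $b = 0$, contradicting $\|b\| = 1$.

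The main obstacle is the first step: converting the implicit first-order optimality condition $\nabla Q(\theta^{(t+1)} \mid \theta^{(t)}; y) = 0$ into the explicit linear relation $\Delta_t = -B_t^{-1} \nabla l(\theta^{(t)}; y)$ with $B_t$ converging to the same limit as $\nabla^2 Q(\theta^{(t)} \mid \theta^{(t)}; y)$. Once that is in place, the role played in Lemma~\ref{lemma:em.gradient} by nonsingularity of $\nabla^2 Q - \nabla^2 l$ at the limit is taken over here by the full-rank hypothesis on $\nabla^2 R(\theta^* \mid \theta^*; y)$, and the remainder of the proof of Lemma~\ref{lemma:em.gradient} carries over essentially unchanged.
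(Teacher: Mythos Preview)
Your proposal is correct and reaches the same contradiction as the paper, but the bridge from the EM update to the quasi-Newton form is built differently. The paper argues that one Newton--Raphson step on $Q(\cdot\mid\theta^{(t)};y)$, starting at $\theta^{(t)}$, is the EM gradient increment $\delta_{EG}(\theta^{(t)})$, and since Newton--Raphson converges superlinearly to the maximizer $\theta^{(t)}+\delta_{EM}(\theta^{(t)})$, one has $\delta_{EG}(\theta)=\delta_{EM}(\theta)+o(\|\delta_{EM}(\theta)\|)$; this ``asymptotic equivalence'' of EM and EM gradient is then combined with Lemma~\ref{lemma:em.gradient} to conclude. You instead use the integral mean value theorem to write the EM step \emph{exactly} as $\theta^{(t+1)}-\theta^{(t)}=-B_t^{-1}\nabla l(\theta^{(t)};y)$ with $B_t\to\nabla^2 Q(\theta^*\mid\theta^*;y)$, and then replay the Dennis--Mor\'e argument of Lemma~\ref{lemma:em.gradient} directly on the EM sequence itself. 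Your route is arguably cleaner: it avoids invoking superlinear convergence of Newton--Raphson for the inner subproblem, and it sidesteps the (slightly informal) step in the paper where non-superlinearity is transferred from the EM gradient sequence to the EM sequence via asymptotic equivalence. The paper's route, on the other hand, makes the conceptual link between EM and \citeauthor{lang:1995}'s EM gradient algorithm explicit. One small point: your claim $B_t\to\nabla^2 Q(\theta^*\mid\theta^*;y)$ needs joint continuity of $\nabla^2 Q$ in both arguments, not just in the first; this is implicit in the paper's own use of Lemma~\ref{lemma:em.gradient} as well, so it is a shared tacit assumption rather than a defect particular to your argument.
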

\begin{proof}
Let $\delta_{EG}$ denote the EM gradient update increment, that is, if 
$\left\{ \theta'^{(t)} \right\}$ is an EM gradient sequence then 
$\theta'^{(t+1)} = \theta'^{(t)} + \delta_{EG}(\theta'^{(t)})$ for each $t$:  
$$
\delta_{EG}(\theta) = - \left[ \nabla^2 Q(\theta | \theta ;y) \right]^{-1}
\nabla Q(\theta | \theta; y) \; .
$$
Define $\delta_{EM}$ analogously, so $\theta + \delta_{EG}(\theta)$ represents 
the first iteration in a Newton-Raphson routine starting at $\theta$ and 
converging to $\theta + \delta_{EM}(\theta)$.  Since Newton-Raphson converges 
superlinearly in this subproblem 
\citep[see, for example,][Theorem 3.1.1]{flet:1987}, we have
$$
\theta + \delta_{EG}(\theta) - \left[ \theta + \delta_{EM}(\theta) 
\right] = o \left( || \delta_{EM}(\theta) || \right)
$$
or
$$
\delta_{EG}(\theta) = \delta_{EM}(\theta) + o \left( || \delta_{EM} 
(\theta) || \right) \; ,
$$
and thus the EM gradient algorithm \eqref{eqn:emg} is asymptotically 
equivalent to the EM algorithm.  But EM gradient is not superlinearly 
convergent by Lemma \ref{lemma:em.gradient}, and thus neither 
is the EM algorithm.  
\end{proof}

\section{Some convergence results for Monte Carlo EM}
\label{sec:MCEM}

It seems a statement of the obvious (and an understatement at that) to 
point out that the study of convergence properties of Monte Carlo EM 
is more complicated than that of ordinary EM.  Even before coming to face 
the complexity of the mathematical arguments, one must determine 
which notion of ``convergence'' one wishes to consider -- what exactly 
is going to infinity?  We mention here three distinct approaches to 
the problem.  

The first serious effort in establishing convergence properties of MCEM 
is that of \citet{chan:ledo:1995}, who treat the data as fixed, and hold 
the Monte Carlo sample size $m$ constant across MCEM iterations.  They 
then let $m$ go to infinity, and study the asymptotic properties of the 
MCEM \emph{sequence} as a Monte Carlo approximation to the ordinary 
EM sequence with the same starting value (whose convergence properties 
are well understood).  We will discuss \pcite{chan:ledo:1995} results 
in considerable detail in subsection \ref{sub:chan.ledo}.  
On the other hand, unless the Monte Carlo sample size is allowed to 
increase with the iteration count, there is no chance for convergence 
in the usual sense (convergence to the MLE) because of persistent 
Monte Carlo error.

In the version of MCEM considered by \citet{sher:ho:dala:1999}, the 
Monte Carlo E-step is carried out by running multiple (independent) 
Markov chains generated by a Gibbs sampler.  Their theoretical results 
are built on allowing the number of chains, the length of each chain, 
and the number of EM iterations $T$ to all tend to infinity, as does the 
data sample size $N$.  They then prove $\sqrt{N}$-consistency and 
asymptotic normality of the estimator $\theta^{(T)}$.  In other words, 
\citet{sher:ho:dala:1999} found conditions under which the MCEM 
approximation to the MLE 
enjoys the same asymptotic properties as 
the MLE itself.  This represents yet another possible notion of 
``convergence'' of MCEM, though not one that we will pursue any further 
in the present paper.  

\citet{fort:moul:2003a} treat the data as fixed, the Monte Carlo sample 
size as increasing (deterministically) across MCEM iterations, and 
establish a$.$s$.$ convergence of the sequence as the iteration count 
goes to infinity.  We consider this the strongest known result on 
the asymptotic properties of MCEM, as this notion of convergence seems 
the most consistent with that of ordinary (deterministic) EM.  We summarize 
\citet{fort:moul:2003a} main conclusions in subsection \ref{sub:fort.moul}.

\subsection{A result of \citet{chan:ledo:1995}}
\label{sub:chan.ledo}

\citet{chan:ledo:1995} showed that, given a suitable starting value, a 
sequence of parameter values generated by the Monte Carlo EM algorithm will 
get arbitrarily close to a maximizer of the observed likelihood with high 
probability.  Their main result is given as Theorem 
\ref{thm:chan.ledolter} below.  We first establish one more convergence 
property of deterministic EM, also attributable to \citet{chan:ledo:1995}. 
%

Let $M_{EM}:\Theta \rightarrow \Theta$ denote the mapping given by the 
deterministic EM update rule, that is, $M_{EM}(\tilde{\theta}) = 
\arg \max Q(\theta | \tilde{\theta}; y)$.  
\begin{lemma}
\label{lem:chan.ledolter}
\citep[Lemma 1 of][]{chan:ledo:1995}  Suppose $\theta^*$ is a 
local maximizer of the log-likelihood $l(\theta; y)$, 
a continous function of $\theta$, and that 
there exists a neighborhood in which $\theta^*$ is the only stationary 
point.  Then for any neighborhood $\mathcal{N}$ of $\theta^*$,  there 
exists a neigborhood $\mathcal{N}^*$ such that an EM sequence 
$\left\{ \theta^{(t)} : t = 0, 1, 2, \ldots \right\}$ started at any 
$\theta^{(0)} \in \mathcal{N}^*$, satisfies (i) $\theta^{(t)} \in 
\mathcal{N}$ for all $t = 1, 2, \ldots$; and (ii) $\theta^{(t)} 
\rightarrow \theta^*$ as $t \rightarrow \infty$.  
\end{lemma}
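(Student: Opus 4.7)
The plan is to trap the EM sequence in a compact sublevel set that shrinks to $\{\theta^*\}$. The three ingredients I would use are (a)~the ascent property (Theorem~\ref{thm:ascent}), (b)~the fact that an isolated stationary point which is also a local max is automatically a \emph{strict} local max --- otherwise any other level-equal point in the neighborhood would itself be a local max and hence a second stationary point, contradicting isolation --- and (c)~continuity of the EM map $M_{EM}$ at its fixed point $\theta^*$ (implicit in the Chan--Ledolter setup; it follows from joint continuity of $Q$ together with uniqueness of the argmax near $\theta^*$).

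First I would shrink $\mathcal{N}$ to an open $V$ with $\overline{V} \subset \mathcal{N}$ compact, contained in both the local-max neighborhood and the unique-stationary-point neighborhood of $\theta^*$. By~(b), $l(\theta^*;y) > l(\theta;y)$ throughout $\overline{V} \setminus \{\theta^*\}$, so compactness of $\partial V$ yields $c_0 := \max_{\theta \in \partial V} l(\theta;y) < l(\theta^*;y)$. For each $c \in (c_0, l(\theta^*;y))$ the sublevel set
\[
B_c = \{\theta \in \overline{V} : l(\theta;y) \geq c\}
\]
is compact and disjoint from $\partial V$, and $B_c$ collapses to $\{\theta^*\}$ as $c \uparrow l(\theta^*;y)$.

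Next, using~(c), I would pick an open $U \ni \theta^*$ with $M_{EM}(U) \subset V$, then choose $c$ close enough to $l(\theta^*;y)$ that $B_c \subset U$, and set $\mathcal{N}^* = \{\theta \in V : l(\theta;y) > c\}$, an open neighborhood of $\theta^*$ sitting inside $B_c$. Conclusion~(i) then follows by induction: if $\theta^{(t)} \in B_c \subset U$, then $\theta^{(t+1)} = M_{EM}(\theta^{(t)}) \in V$, while ascent gives $l(\theta^{(t+1)};y) \geq l(\theta^{(t)};y) \geq c$, so $\theta^{(t+1)} \in B_c \subset \mathcal{N}$.

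For~(ii), compactness of $B_c$ supplies a cluster point $\theta^{**}$ of the sequence, and monotonicity plus boundedness make $l(\theta^{(t)};y)$ converge. Continuity of $l$ and of $M_{EM}$ then forces $l(M_{EM}(\theta^{**});y) = l(\theta^{**};y)$; by equality in Theorem~\ref{thm:ascent} the point $\theta^{**}$ itself maximizes $Q(\cdot \mid \theta^{**};y)$, so $\nabla l(\theta^{**};y) = \nabla Q(\theta^{**} \mid \theta^{**};y) = 0$, making $\theta^{**}$ a stationary point of $l$. Uniqueness of the stationary point inside $\mathcal{N}$ forces $\theta^{**} = \theta^*$, and since every cluster point in the compact set $B_c$ equals $\theta^*$, the whole sequence satisfies $\theta^{(t)} \to \theta^*$. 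The main obstacle is really conclusion~(i): the ascent property controls only likelihood \emph{values}, not iterate \emph{locations}, and the gap is bridged by continuity of $M_{EM}$ at its fixed point combined with a sufficiently tight sublevel set.
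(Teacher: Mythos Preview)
Your argument is correct and follows essentially the same architecture as the paper's proof: trap the EM sequence in a compact set built from a superlevel set of $l$, invoke ascent for monotonicity, extract cluster points by compactness, and use strict ascent at non-fixed points (equivalently, uniqueness of the stationary point) to identify every cluster point with $\theta^*$.

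The one substantive difference is that you make explicit an ingredient the paper leaves implicit.  The paper simply asserts that its conditions on $\mathcal{N}^*$ imply $M_{EM}(\mathcal{N}^*) \subset \mathcal{N}^*$, without saying why the EM update cannot jump to a higher-likelihood point far outside $\mathcal{N}^*$; you close this gap by invoking continuity of $M_{EM}$ at $\theta^*$ to first land the update in $V$, and only then appeal to ascent to force it into the superlevel set $B_c$.  This is exactly the ``main obstacle'' you flag at the end, and your treatment of it is cleaner than the paper's.  For part~(ii) both proofs need continuity of $M_{EM}$ on the whole compact set (not just at $\theta^*$) to pass to the limit $l(\theta^{(t_k+1)};y) \to l(M_{EM}(\theta^{**});y)$; you should note that your hypothesis ``continuity at its fixed point'' must in fact be continuity on a neighborhood, which is what the joint continuity of $Q$ and local uniqueness of the argmax actually deliver.
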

\begin{proof} 
%
%
%
Let $\mathcal{N}$ be a neighborhood of $\theta^*$.  There exists a compact, 
connected subneighborhood $\mathcal{N}^* \subset \mathcal{N}$ such that 
(i) $l(\theta; y)$ attains its maximum over $\mathcal{N}^*$ at $\theta^*$, 
(ii) $\mathcal{N}^*$ contains no other stationary points of $l$, and (ii) 
there exists $\varepsilon > 0$ such that $l(\theta; y) \geq 
l(\theta^*; y) - \varepsilon$ for all $\theta \in \mathcal{N}^*$.  
It follows from these conditions that 
$M_{EM}(\theta) \in \mathcal{N}^*$ for any $\theta \in \mathcal{N}^*$; thus
an EM sequence $\left\{ \theta^{(t)} \right\}$ with $\theta^{(0)} \in 
\mathcal{N}^*$ satisfies $\theta^{(t)} \in \mathcal{N}^*$, and thus 
$\theta^{(t)} \in \mathcal{N}$, for all $t = 1, 2, \ldots$.  

Continue to assume that 
$\theta^{(0)} \in \mathcal{N}^*$ and consider the EM sequence 
$\left\{ \theta^{(t)} \right\}$.  Now the 
%
%
%
%
%
%
sequence $\left\{ l( \theta^{(t)}; y) \right\}$ is nondecreasing and 
bounded above by $l(\theta^*; y)$, and thus converges to a finite limit, 
call it $\lambda$.  The 
sequence $\left\{ \theta^{(t)} \right\}$ lives in $\mathcal{N}^*$, a compact 
set; let $\left\{\theta^{(t_k)} \right\}$ be a convergent subsequence and 
denote its limit by $\theta^{**} \in \mathcal{N}^*$.  

Suppose $\theta^{**} \neq \theta^*$.  Then $l(\theta^{(t_k+1)}; y) 
\rightarrow l(M_{EM}(\theta^{**}); y) > l(\theta^{**}; y) = \lambda$.  That is, 
the subsequence $\left\{l(\theta^{(t_k+1)}; y) \right\}$ converges to a 
limit greater than $\lambda$, a contradiction.  

Thus any convergent subsequence of $\left\{ \theta^{(t)} \right\}$ must 
converge to $\theta^*$; thus $\left\{\theta^{(t)} \right\}$ converges to 
$\theta^*$.  
\end{proof}
In the terminology of the stability theory of dynamical systems 
\citep[see, for example,][section 3.5]{arro:plac:1992}, the lemma asserts 
that 
an isolated local maximizer $\theta^*$ of $l(\theta; y)$ 
is an \emph{asymptotically stable fixed point} for the EM 
algorithm.  Practically, Lemma \ref{lem:chan.ledolter} tells us that 
an EM sequence with a sufficiently close starting value will remain 
arbitrarily close to $\theta^*$ (by stability) as well as converge to 
$\theta^*$. 
\begin{theorem}
\label{thm:chan.ledolter}
\citep[Theorem 1 of][]{chan:ledo:1995}. Let $\left\{ \theta^{(t)} \right\}$ 
denote a Monte Carlo EM sequence based on Monte Carlo sample sizes 
$m_t \equiv m$, and suppose that the MCEM update $\mathcal{M}_m(
\tilde{\theta}) := \arg \max Q_m(\theta | \tilde{\theta}; y)$ converges in 
probability to $M_{EM}(\tilde{\theta})$ as $m \rightarrow \infty$.  Further 
suppose that this convergence is uniform on compact subsets of $\Theta$.  
Let $\theta^*$ be an isolated local maximizer of $l(\theta; y)$, a 
continous function of $\theta$.  Then there exists a neighborhood of 
$\theta^*$ such that for any starting value $\theta^{(0)}$ in that 
neighborhood and for any $\varepsilon > 0$, there exists $T_0$ such that 
\begin{equation}
\label{eqn:chan.ledo.thm1}
\mathrm{Pr} \left\{ || \theta^{(t)} - 
\theta^* || < \varepsilon ~\mathrm{for}~\mathrm{some}~ t \leq T_0 
\right\}~ \rightarrow~ 1
\end{equation}
as the Monte Carlo sample size $m \rightarrow 
\infty$.  
\end{theorem}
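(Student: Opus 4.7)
The plan is to couple the Monte Carlo EM sequence $\{\theta^{(t)}\}$ with a deterministic EM sequence $\{\bar\theta^{(t)}\}$ starting from the \emph{same} initial value $\theta^{(0)}$, and to argue by finite induction that the two trajectories stay within a prescribed distance of each other for all $t = 0, 1, \ldots, T_0$ with probability tending to $1$ as $m \to \infty$. The deterministic trajectory is handled by the previous lemma, and the stochastic trajectory is pulled along by the uniform-in-probability convergence $\mathcal{M}_m \to M_{EM}$ on compact sets.

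First I would apply Lemma \ref{lem:chan.ledolter} to the isolated local maximizer $\theta^*$ to produce a compact neighborhood $\mathcal{N}^* \subset \Theta$ such that $M_{EM}(\mathcal{N}^*) \subset \mathcal{N}^*$ and every deterministic EM sequence initiated in $\mathcal{N}^*$ converges to $\theta^*$. Fix an arbitrary $\theta^{(0)} \in \mathcal{N}^*$ and let $\{\bar\theta^{(t)}\}$ be the deterministic EM sequence it generates; then $\bar\theta^{(t)} \to \theta^*$, so for any $\varepsilon > 0$ one can pick a finite $T_0$ with $\|\bar\theta^{(T_0)} - \theta^*\| < \varepsilon/2$. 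Enlarging $\mathcal{N}^*$ slightly to a compact $\bar{\mathcal{N}} \subset \Theta$, I would choose a shrinkage constant $\eta = \varepsilon/(2 T_0)$ or similar so that tolerances can be allocated across the $T_0$ steps.

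The inductive step uses the triangle inequality
\[
\|\theta^{(t+1)} - \bar\theta^{(t+1)}\| \;\leq\; \|\mathcal{M}_m(\theta^{(t)}) - M_{EM}(\theta^{(t)})\| + \|M_{EM}(\theta^{(t)}) - M_{EM}(\bar\theta^{(t)})\|.
\]
The first term on the right is the Monte Carlo error at step $t$, which by hypothesis is smaller than any prescribed tolerance on $\bar{\mathcal{N}}$ with probability $\to 1$ as $m \to \infty$. The second term is controlled by continuity of $M_{EM}$, uniform on the compact set $\bar{\mathcal{N}}$, coupled with the inductive hypothesis $\|\theta^{(t)} - \bar\theta^{(t)}\|$ being small. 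Taking a union bound over the finitely many steps $t = 0, 1, \ldots, T_0 - 1$, both the event $\{\theta^{(t)} \in \bar{\mathcal{N}} \text{ for all } t \leq T_0\}$ and the event $\{\|\theta^{(T_0)} - \bar\theta^{(T_0)}\| < \varepsilon/2\}$ occur with probability tending to $1$, giving $\|\theta^{(T_0)} - \theta^*\| < \varepsilon$ with probability $\to 1$.

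The main obstacle is confirming that the stochastic iterates stay inside a compact set on which the uniform-in-probability convergence of $\mathcal{M}_m$ is valid; this is not automatic because $\mathcal{M}_m$ can in principle eject an iterate from $\bar{\mathcal{N}}$. The remedy is to carry ``$\theta^{(t)} \in \bar{\mathcal{N}}$'' as part of the inductive claim and absorb its failure probability into the union bound: since both the Monte Carlo error and the propagated deterministic error can be made arbitrarily small over a fixed finite horizon of $T_0$ steps, the probability of exiting $\bar{\mathcal{N}}$ before time $T_0$ vanishes as $m \to \infty$. A secondary technical point is justifying continuity of $M_{EM}$; this ordinarily follows from the standard smoothness and strict-concavity hypotheses on $Q(\cdot \mid \tilde\theta; y)$ that underlie the existence of the argmax in the first place.
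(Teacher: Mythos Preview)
Your approach is correct in outline but genuinely different from the paper's, and the comparison is instructive.

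You argue by \emph{coupling}: fix $\theta^{(0)}$, run the deterministic EM trajectory $\{\bar\theta^{(t)}\}$ alongside the stochastic one, pick $T_0$ so that $\bar\theta^{(T_0)}$ is within $\varepsilon/2$ of $\theta^*$, and then show by finite induction that $\|\theta^{(t)}-\bar\theta^{(t)}\|$ stays small for $t\le T_0$. This works, but it leans on two things the paper avoids. First, you need continuity of $M_{EM}$ to control the propagation term $\|M_{EM}(\theta^{(t)})-M_{EM}(\bar\theta^{(t)})\|$; the paper never invokes this. Second, your tolerance allocation $\eta=\varepsilon/(2T_0)$ is too naive: the modulus of continuity of $M_{EM}$ can amplify the accumulated error at each step, so you really need a backward recursion (choose $\eta_{T_0-1}$ so that $\omega(\eta_{T_0-1})<\varepsilon/4$, then $\eta_{T_0-2}$ so that $\omega(\eta_{T_0-2})<\eta_{T_0-1}/2$, and so on). This is fixable over a finite horizon, but it is more delicate than you indicate, and your $T_0$ ends up depending on $\theta^{(0)}$.

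The paper instead exploits the \emph{ascent property} directly and never tracks a companion deterministic trajectory. It shows that whenever $\theta^{(t)}$ lies in $\mathcal{N}$ but outside the $\varepsilon_1$-ball around $\theta^*$, any point within $\delta$ of $M_{EM}(\theta^{(t)})$ increases the log-likelihood by at least a fixed amount $\delta_1>0$. Since the total oscillation of $l$ over $\mathcal{N}$ is some finite $R$, the MCEM sequence cannot remain outside the ball for more than $T_0=\lfloor R/\delta_1\rfloor+1$ steps, provided each MCEM update lands within $\delta_2$ of the corresponding EM update, an event whose probability tends to $1$ uniformly over $\mathcal{N}$. This argument yields a $T_0$ that is uniform over all starting points in $\mathcal{N}$, requires no continuity of $M_{EM}$, and sidesteps any error-accumulation bookkeeping.
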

\begin{proof}
Let 
$\mathcal{N}$ be the set 
defined as $\mathcal{N}^*$ in the proof of Lemma \ref{lem:chan.ledolter}, 
so that $\mathcal{N}$ is compact and connected, contains $\theta^*$, and 
$M_{EM}(\theta) \in \mathcal{N}$ for any $\theta \in \mathcal{N}$.  For any 
$\varepsilon > 0$, we will find $T_0$ such that \eqref{eqn:chan.ledo.thm1} 
holds for any $\theta^{(0)} \in \mathcal{N}$.  

Let $\varepsilon > 0$ be given.  First, there exists a positive number 
$\varepsilon_1 \leq \varepsilon$ such that $\mathcal{N}_1 := 
\left\{ \theta \in \mathcal{N}: || \theta - \theta^* || \geq 
\varepsilon_1 \right\}$ is nonempty; 
if $\theta \in \mathcal{N}_1$, then $M_{EM}(\theta) \neq 
\theta$.  By the ascent property and by continuity of $l$ in $\theta$ there 
exist $\delta, ~\delta_1 > 0$ such that for any $\theta \in \mathcal{N}_1$, 
if $||\theta' - M_{EM}(\theta) || < \delta$, then 
$l(\theta'; y) - l(\theta; y) > \delta_1$.  

By construction 
of $\mathcal{N}$, there exists $\delta_2 > 0$ such that for any 
$\theta \in \mathcal{N}$, any $\theta'$ with $|| \theta' - 
M_{EM}(\theta) || < \delta_2$ is also in $\mathcal{N}$.  Without loss 
of generality we can take $\delta_2 < \delta$.  Thus we have that for any 
$\theta \in \mathcal{N}_1$, any $\theta'$ with $|| \theta' - 
M_{EM}(\theta) || < \delta_2$ is also in $\mathcal{N}$ (though not 
necessarily in $\mathcal{N}_1$) and $l(\theta'; y) - l(\theta; y) 
> \delta_1$.  Let 
\begin{equation}
\label{eqn:R.def}
R = \sup_{\theta, \theta' \in \mathcal{N}} \left\{ l(\theta; y) - 
l(\theta'; y) \right\} < \infty
\end{equation}
and let $T_0 =\lfloor R/\delta_1 \rfloor + 1$, 
where $\lfloor \cdot \rfloor$ denotes the greatest integer function.  

Now, suppose an element of the MCEM sequence 
$\theta^{(t)} = \tilde{\theta} \in \mathcal{N}$.  
Then the probability that its MCEM update $\theta^{(t+1)} 
= \mathcal{M}_m(\theta^{(t)})$ is also in $\mathcal{N}$ is
\begin{equation}
\label{eqn:chan.ledo.prf1}
\mathrm{Pr} \left\{ \theta^{(t+1)} \in \mathcal{N} ~\big{\vert}~ 
\theta^{(t)} = \tilde{\theta} \right\} \geq \mathrm{Pr} \left\{ 
|| \theta^{(t+1)} - M_{EM}(\theta^{(t)}) || < \delta_2 ~\big{\vert}~ 
\theta^{(t)} = \tilde{\theta} \right\}
\end{equation}
by the definition of $\delta_2$.  Denote a lower bound on the right hand 
side of \eqref{eqn:chan.ledo.prf1} by $p = p(\delta_2, m) > 0$ and note 
that (i) $p$ can be chosen not to depend on the value of $\tilde{\theta} 
\in \mathcal{N}$ by the compactness of $\mathcal{N}$ and the uniformity of 
convergence $\mathcal{M}_m(\theta) \rightarrow M_{EM}(\theta)$ over compact 
subsets of $\Theta$; and (ii) for fixed $\delta_2$, 
$p(\delta_2, m) \rightarrow 1$ as $m \rightarrow \infty$.  

Consider running a Monte Carlo EM algorithm for $T_0$ updates.  
For any starting value $\theta^{(0)} \in \mathcal{N}$, 
\begin{equation}
\label{eqn:chan.ledo.prf2}
\begin{split}
\mathrm{Pr} \left\{ \theta^{(t)} \in \mathcal{N} ~ \mathrm{for}~ t = 
 \right. & \left. 0, 1, 
\ldots, T_0 \right\} \geq \\ & \mathrm{Pr} \left\{ || \theta^{(t+1)} - 
M_{EM}(\theta^{(t)}) || < \delta_2 ~\mathrm{for}~ t = 0, 1, \ldots, T_0 - 1 
\right\},
\end{split}
\end{equation}
and since each Monte Carlo EM update is calculated independently, the right 
hand side of \eqref{eqn:chan.ledo.prf2} is bounded below by 
$p(\delta_2, m)^{T_0}$.  

Now, suppose that $\theta^{(0)} \in \mathcal{N}$, and that 
$|| \theta^{(t+1)} - M_{EM}(\theta^{(t)}) || < \delta_2$ for each $t$, and thus 
$\theta^{(t)} \in \mathcal{N}$ for each $t$.  Suppose to get a contradiction 
that $|| \theta^{(t)} - \theta^* || \geq \varepsilon_1$, that is, that 
$\theta^{(t)} \in \mathcal{N}_1$ for each $t = 0, 1, \ldots, T_0$.  Then 
$l(\theta^{(t+1)}; y) - l(\theta^{(t)}; y) > \delta_1$ for each $t = 0, 1, 
\ldots, T_0 - 1$, and thus $l(\theta^{(T_0)}; y) - l(\theta^{(0)}; y) > 
\delta_1 T_0 > R$.  But that contradicts \eqref{eqn:R.def}, the definition 
of $R$, since $\theta^{(0)}$ and $\theta^{(T_0)}$ are both in $\mathcal{N}$.  

Thus it must be that if $\theta^{(0)} \in \mathcal{N}$ and 
$|| \theta^{(t+1)} - M_{EM}( \theta^{(t)} ) || < \delta_2$ for each $t$, then 
$|| \theta^{(t)} - \theta^* || < \varepsilon_1 \leq \varepsilon$ for some $t$, 
which occurs with probability not less than $p(\delta_2, m)^{T_0}$, which 
converges to 1 as $m \rightarrow \infty$.  
\end{proof}

A couple of remarks are in order.  First, we note that the assumptions of 
Theorem \ref{thm:chan.ledolter} are slightly different than those made by 
\citet{chan:ledo:1995} in that where we assumed uniform convergence of the 
Monte Carlo EM update, \citet{chan:ledo:1995} assumed conditions on the 
form of the log-likelihood sufficient to guarantee it.  
Secondly, the conclusion of Theorem 
\ref{thm:chan.ledolter}, while interesting, is unsatisfying in at least 
one respect: It does not guarantee the convergence of an MCEM sequence in 
any meaningful sense.  Practically, what this theorem tells us is that if 
you run the algorithm long enough (at least $T_0$ iterations), the 
resulting sequence will, with high probability, \emph{at some point} get 
arbitrarily close to the MLE.  But to an analyst examining the output of 
an MCEM run, even a very long one, there is no way to know when that has 
happened, if at all.  A more powerful result would be one that 
specifies conditions under which the algorithm gets close to the MLE and 
stays there.  

\subsection{A result of \citet{fort:moul:2003a}}
\label{sub:fort.moul}

\cite{fort:moul:2003a} used the ergodic theory of Markov chains to prove 
the almost sure (a.s.) convergence of a variation of the Monte Carlo EM 
algorithm.  We will state their assumptions and main conclusion; the 
proof is highly technical and beyond the scope of this report.

We will state \citet{fort:moul:2003a} convergence result assuming that 
the Monte Carlo E-step is accomplished by i$.$i$.$d$.$ sampling.  In 
fact the result holds more generally under Markov chain Monte Carlo 
methods, assuming the underlying Markov transition kernel is 
\emph{uniformly ergodic} \citep[see, for example,][]{jone:hobe:2001}.  

\citet{fort:moul:2003a} consider a variation of Monte Carlo EM they call 
\emph{stable MCEM}, which we define here.  Let $\left\{ \mathcal{K}_t : 
t = 0, 1, 2, \ldots \right\}$ be a sequence of compact subsets of 
$\Theta$ satisfying
\begin{equation}
\label{eqn:stable.mcem}
\mathcal{K}_t \subset \mathcal{K}_{t+1} ~ ~\mathrm{for}~\mathrm{each}~ t, 
~~ \mathrm{and}~~ \bigcup_{t = 0}^{\infty} \mathcal{K}_t = \Theta \; .
\end{equation}
Set $p_0 = 0$ and choose $\theta^{(0)} \in \mathcal{K}_0$.  Given 
$\theta^{(t)}$ and $p_t$, the stable MCEM update rule for $\theta^{(t+1)}$ 
and $p_{t+1}$ is given by
\begin{enumerate}
\item Let $\theta'$ be the ordinary MCEM update as defined in Section 
\ref{sec:Intro}.
\item If $\theta' \in \mathcal{K}_{p_t}$, then $\theta^{(t+1)} = \theta'$ 
and $p_{t+1} = p_t$.  

If $\theta' \notin \mathcal{K}_{p_t}$, then $\theta^{(t+1)} = \theta^{(0)}$ 
and $p_{t+1} = p_t + 1$.
\end{enumerate}
Thus in stable MCEM, any time the ordinary MCEM update falls outside a 
specific set, the algorithm is reinitialized at the point $\theta^{(0)}$; 
$p_t$ counts the cumulative number of reinitializations as of update $t$.  
\citet{fort:moul:2003a} showed that under appropriate assumptions 
(see Theorem \ref{thm:fort.moul} below), $\left\{ p_t \right\}$ is 
a$.$s$.$ finite.  

We will assume that the 
complete data model $f(y,u; \theta)$ is from the class of 
\emph{curved exponential families}: Let $\mathcal{Y} \subset \mathbb{R}^N$ 
denote the range of $Y$ and $\mathcal{U} \subset \mathbb{R}^q$ the range 
of $U$.  We assume that for some integer $k$ there exist functions $\phi: 
\Theta \rightarrow \mathbb{R}^1$, $\psi : \Theta \rightarrow \mathbb{R}^k$, 
and $S : \mathcal{Y} \times \mathcal{U} \rightarrow \mathcal{S} \subset 
\mathbb{R}^k$ such that 
$$
l_c(\theta; y, u) = \log f(y, u; \theta) = \psi(\theta)^T S(y,u) + 
\phi(\theta) \; .
$$
Since $l_c$ depends on $(y,u)$ only through $s = S(y,u)$ we can write 
$l_c(\theta; s) = \psi(\theta)^T s + \phi(\theta)$.  
Note that the curved exponential families include the linear mixed model 
of Example 1 in Section \ref{sec:2shm}, but not the logit-normal GLMM 
of Example 2.  

We will further assume that
\begin{enumerate}
\item $\phi$ and $\psi$ are continuous on $\Theta$, $S$ is continuous on 
$\mathcal{Y} \times \mathcal{U}$;  
\item for all $\theta \in \Theta$, $\bar{S}(\theta; y) := \mathrm{E} 
\left\{ S(y, U) ~|~  y; ~\theta \right\}$ is finite and continuous on 
$\Theta$;
\item there exists a continuous function $\hat{\theta}: \mathcal{S} 
\rightarrow \Theta$ such that for all $s \in \mathcal{S}$, 
$l_c(\hat{\theta}(s); s) = \sup_{\theta \in \Theta} l_c(\theta; s)$; 
\item the observed data log-likelihood $l(\theta; y)$ is continuous on 
$\Theta$, and for any $\lambda$, the level set $\left\{ \theta \in \Theta : 
l(\theta; y) \geq \lambda \right\}$ is compact;
\item the set of fixed points of the EM algorithm is compact.
\end{enumerate}
Let $\Gamma$ denote the set of fixed points of the EM algorithm; in a 
curved exponential family, and using the notation introduced above, 
$\Gamma = \left\{ \theta \in \Theta : \hat{\theta}( \bar{S}(\theta; y) ) = 
\theta \right\}$. 
As shown by \citet[][Theorem 2]{wu:1983}, under the above assumptions, 
if $\Theta$ is open and $\phi$ and $\psi$ are differentiable on $\Theta$, 
then $l(\theta; y)$ is differentiable on $\Theta$ and $\Gamma = 
\left\{ \theta \in \Theta : \nabla l(\theta; y) = 0 \right\}$.  In 
other words, the set of fixed points of the EM algorithm coincides with 
the set of stationary points of the log-likelihood $l(\theta; y)$; see 
also our Theorem \ref{thm:em.thm2}.  

%
Finally, note that assumptions $4.$ and $5.$ guarantee that the set 
$\left\{ \l(\theta; y) : \theta \in \Gamma \right\}$
is compact as well.  
We can now state \pcite{fort:moul:2003a} main result.  We will denote the 
\emph{closure} of a sequence by $\mathrm{Cl}(\cdot)$, so that 
$\mathrm{Cl}( \left\{ \theta^{(t)} \right\} )$ represents the union of the 
sequence $\left\{ \theta^{(t)} \right\}$ itself with its limit points.

\begin{theorem}
\label{thm:fort.moul} \citep[Theorem 3 of][]{fort:moul:2003a} Assume the 
complete data model is from the class of curved exponential families, and 
the model satisfies assumptions $1.$ through $6.$ above.  Consider an 
implementation of the stable MCEM algorithm using a sequence of sets $\left\{ 
\mathcal{K}_t \right\}$ satisfying \eqref{eqn:stable.mcem}.  Let 
$\theta^{(0)} \in \mathcal{K}_0$ and suppose the Monte Carlo sample sizes 
$\left\{ m_t \right\}$ satisfy $\sum_{t=0}^{\infty} m_t^{-1} < \infty$.  Then
\begin{enumerate}
\item 
\begin{enumerate}
\item $\lim_{t \rightarrow \infty} p_t < \infty$ with probability 1 (w$.$p$.$ 1) 
and $\limsup_{t \rightarrow \infty} || \theta^{(t)} || < \infty$ w$.$p$.$ 1;
\item $\left\{ l(\theta^{(t)}; y) \right\}$ converges w$.$p$.$ 1 to a 
connected component of $l(\Gamma; y)$ where $\Gamma$ denotes the set of 
stationary points of $l(\theta; y)$ (and fixed points of the EM algorithm).
\end{enumerate}
\item If, in addition, $\left\{ l( \theta; y ) : \theta \in 
\Gamma \cap \mathrm{Cl}( \left\{ \theta^{(t)} \right\} ) \right\}$ has an empty 
interior, then $\left\{ l(\theta^{(t)}; y) \right\}$ converges w$.$p$.$ 1 to 
a point $\lambda^*$ and $\left\{ \theta^{(t)} \right\}$ converges to the set 
$\left\{ \theta : l(\theta; y) = \lambda^* \right\}$.  
\end{enumerate}
\end{theorem}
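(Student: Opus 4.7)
The plan is to treat stable MCEM as a stochastic perturbation of deterministic EM and use the observed log-likelihood $l(\,\cdot\,;y)$ as a Lyapunov function. In a curved exponential family the EM map can be written $M_{EM}(\theta) = \hat{\theta}(\bar{S}(\theta; y))$ and the MCEM map as $\mathcal{M}_{m_t}(\theta^{(t)}) = \hat{\theta}(\hat{S}_{m_t}(\theta^{(t)}))$, where $\hat{S}_{m_t}(\theta) = m_t^{-1}\sum_{k=1}^{m_t} S(y, U^{(t,k)})$ and the $U^{(t,k)}$ are i.i.d. draws from $h(\cdot\,|\,y;\theta^{(t)})$. Assumption 3 gives continuity of $\hat\theta$, so the MCEM update takes the form $\theta^{(t+1)} = M_{EM}(\theta^{(t)}) + \eta_t$, where the perturbation $\eta_t$ is controlled, on compact sets, by $\|\hat{S}_{m_t}(\theta^{(t)}) - \bar{S}(\theta^{(t)};y)\|$. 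Since the conditional variance of $S(y,U)$ is continuous and therefore bounded on compact subsets of $\Theta$, Chebyshev's inequality gives tail bounds of order $1/m_t$, and the summability $\sum_t m_t^{-1} < \infty$ combined with Borel--Cantelli will be the workhorse behind every subsequent step.

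To prove part 1(a), I would first observe that the deterministic EM trajectory starting from $\theta^{(0)}$ is confined to the sublevel set $\{\theta : l(\theta;y)\geq l(\theta^{(0)};y)\}$ by the ascent property (Theorem \ref{thm:ascent}), and that this sublevel set is compact by assumption 4. Pick $j$ large enough that this sublevel set lies in the interior of $\mathcal{K}_j$. The claim is then that with probability one the MCEM trajectory also lives in $\mathcal{K}_j$ from some time on: before a $(j{+}1)$st reinitialization can occur, the MCEM run starting in $\mathcal{K}_j$ would have to exit it, and by the Borel--Cantelli argument above the probability of such an exit is summable in $t$. One concludes that $p_t$ is eventually constant, say $p_t \equiv p^*$, so $\theta^{(t)}$ is eventually trapped in the compact set $\mathcal{K}_{p^*}$, giving both claims in 1(a).

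For part 1(b) the Lyapunov computation is
\[
l(\theta^{(t+1)};y)-l(\theta^{(t)};y) = \bigl[l(\theta^{(t+1)};y)-l(M_{EM}(\theta^{(t)});y)\bigr] + \bigl[l(M_{EM}(\theta^{(t)});y)-l(\theta^{(t)};y)\bigr].
\]
The second bracket is $\geq 0$ by the EM ascent property, while the first is a Monte Carlo remainder that, by smoothness of $l$ on $\mathcal{K}_{p^*}$ and the bound on $\|\eta_t\|$, is summable in $t$ once refined by a martingale or maximal inequality. Hence $\{l(\theta^{(t)};y)\}$ is an almost-monotone sequence with a summable perturbation and converges a.s.\ to some $\lambda^*$. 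Any cluster point $\theta^{\infty}$ of $\{\theta^{(t)}\}$ must lie in $\Gamma$, for otherwise $M_{EM}(\theta^{\infty}) \neq \theta^{\infty}$ and, by continuity, the ascent $l(M_{EM}(\theta^{(t)});y)-l(\theta^{(t)};y)$ would be bounded below by a positive constant along a subsequence, contradicting convergence of $l(\theta^{(t)};y)$. A further Borel--Cantelli argument gives $\theta^{(t+1)}-\theta^{(t)}\to 0$ a.s., which prevents the cluster-point set of $\{l(\theta^{(t)};y)\}$ from jumping across disjoint components of $l(\Gamma;y)$ and yields convergence to a single connected component.

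For part 2, a connected subset of $\mathbb{R}$ with empty interior is a singleton, so the connected component obtained above collapses to a point $\lambda^*$, and every cluster point of $\{\theta^{(t)}\}$ then lies in the level set $\{\theta : l(\theta;y) = \lambda^*\}$. I expect the main obstacle to be the summability step in the Lyapunov analysis: a naive Chebyshev bound only yields $\|\eta_t\|\to 0$ in probability, not almost-sure summability of the perturbation, so one genuinely needs a sharper maximal inequality or a Doob-type decomposition of the Monte Carlo error into a martingale-difference part and a summable remainder in order to promote $\sum_t m_t^{-1}<\infty$ into almost-sure control of $\sum_t\|\eta_t\|$. This martingale / uniform strong law machinery is the technical heart of the argument and is exactly what makes the full proof ``beyond the scope of this report.''
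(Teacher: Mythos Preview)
The paper does not prove this theorem. Immediately before stating it, the author writes that ``the proof is highly technical and beyond the scope of this report,'' and Theorem~\ref{thm:fort.moul} is presented as a quotation of \citet[Theorem~3]{fort:moul:2003a} with no accompanying argument. There is therefore no ``paper's own proof'' for your proposal to be compared against; your closing sentence shows you already noticed this.

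As an outline of the Fort--Moulines strategy your sketch is broadly faithful: recasting MCEM in the sufficient-statistic space via $\hat\theta\circ\hat S_{m_t}$, using $l(\,\cdot\,;y)$ as a Lyapunov function, splitting the increment into an EM ascent term and a Monte Carlo remainder, and invoking summability $\sum_t m_t^{-1}<\infty$ through Borel--Cantelli are exactly the ingredients of their stochastic-approximation proof. Two places where your sketch would need tightening if you actually carried it out: (i) your stability argument for 1(a) fixes a single $j$ with $\{\theta:l(\theta;y)\geq l(\theta^{(0)};y)\}\subset\mathcal K_j$, but the relevant event is not ``the MCEM path leaves $\mathcal K_j$'' --- after each reinitialization the comparison set changes to $\mathcal K_{p_t}$, so one must bound the probability of \emph{ever} leaving $\mathcal K_{p_t}$ given $p_t=j$, uniformly in the restart time, which is where the uniform-in-$\theta$ variance bound on $S(y,U)$ over compacts is actually used; and (ii) as you yourself flag, Chebyshev alone does not give almost-sure summability of the Lyapunov remainder, and Fort and Moulines indeed pass through a martingale decomposition and a maximal inequality to close this gap.
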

It is often the case that the set $\Gamma$ is made up of isolated points; 
the above theorem then guarantees pointwise convergence of 
$\left\{ l(\theta^{(t)}; y) \right\}$ to a stationary point of $l(\theta; y)$.  
If $\Gamma$ consists of a single point $\hat{\theta}$, the theorem guarantees 
that $l(\theta^{(t)}; y) \rightarrow l(\hat{\theta}; y)$ w$.$p$.$ 1 
and $\theta^{(t)} \rightarrow \hat{\theta}$ w$.$p$.$ 1, analogous to 
Corollary \ref{cor:unique.mle}.  

Finally, we note that the assumption that $\sum m_t^{-1} < \infty$ can be 
weakened in many instances, but is necessarily of the form 
$\sum m_t^{-p} < \infty$ for some $p \geq 1$.



\section{Remarks: Lessons for the (MC)EM practitioner}
\label{sec:conclusion}


We conclude with a brief discussion of the practical implications of 
the convergence results of Sections \ref{sec:EM} and  \ref{sec:MCEM}.  
First, as we noted in our discussion following Theorem \ref{thm:em.thm2}, 
even when EM converges, there is no guarantee in general that it has 
converged to a global maximum.  In more complex settings such as mixture 
models, or model-based clustering, the likelihood function may have 
multiple optima, most of which will be local optima.  While the EM 
algorithm may converge, its limit point is sub-optimal.  Solutions to 
overcome local optima can include merging the ideas of the EM algorithm 
with those of global optimization.  One example is described in the paper 
by \citet{heat:fu:jank:2009} who combine EM with the cross-entropy 
method and model reference adaptive search, two global optimization 
heuristics.  Another example can be found in \citet{tu:ball:jank:2008}, 
who combine the EM algorithm with the genetic algorithm to model flight 
delay distributions.  

With respect to Monte Carlo EM, 
as we have
%
%
previously noted, the Monte Carlo sample size must be increased with the 
iteration count; otherwise there is no chance for convergence in the 
usual sense, due to the persistence of Monte Carlo error.  The 
convergence results of section 4.2 \citep{fort:moul:2003a} require 
$\sum m_t^{-1} < \infty$.  Intuitively it makes sense to start the 
algorithm with modest simulation sizes: when the parameter value is 
relatively far from the MLE, the (deterministic) EM update makes a 
substantial jump, and less precision is required for the Monte Carlo 
approximation to that jump.  When the parameter value is close to the 
MLE, as will be the case after a number of iterations, the EM update 
is a small step, and greater precision is required for the Monte Carlo 
approximation.  

Thus it is clear that $m_t$ must be an increasing function of $t$, though 
it is not at all clear what might be an appropriate form.  In fact there 
exists a literature, beginning with \citet{boot:hobe:1999}, on 
\emph{automated} Monte Carlo EM algorithms, in which the simulation size 
for each Monte Carlo E-step is determined internally to the algorithm, 
based on some rule for assessing the level of precision required for the 
Monte Carlo approximation at hand.  Other authors who have contributed 
to this literature include \citet{levi:case:2001} and 
\citet{caff:jank:jone:2005}.  

One can view 
the Monte Carlo EM update to the parameter value $\theta^{(t)}$ as an 
estimate of the deterministic EM update $M_{EM}(\theta^{(t)})$.  
In \pcite{boot:hobe:1999} algorithm, each MCEM update requires the 
computation of an asymptotic confidence region for $M_{EM}(\theta^{(t)})$
in addition to the point estimate $\mathcal{M}_{m_t}(\theta^{(t)})$.  
If $\theta^{(t)}$ falls within this confidence region, we must accept 
that the current parameter value $\theta^{(t)}$ is statistically 
indistinguishable from its EM update $M_{EM}(\theta^{(t)})$.  This suggests 
that the MCEM update was ``swamped by Monte Carlo error,'' and thus 
the simulation 
size must be increased at the next iteration.  
The reader is referred to \citet{boot:hobe:1999} for details and 
examples.  \citet{levi:case:2001} use a regeneration-based approach to 
Monte Carlo standard errors in computing their confidence region.  

The Ascent-based Monte Carlo EM algorithm of \citet{caff:jank:jone:2005} 
seeks to prevent the MCEM update from being swamped by Monte Carlo error 
by successively appending the Monte Carlo sample until one has a 
pre-specified level of confidence that the proposed update increases 
the log-likelihood over the current parameter value, that is, until we 
are confident that indeed $\l(\theta^{(t+1)}; y) \geq l(\theta^{(t)}; y)$.  
Recall that this 
ascent property 
is guaranteed for ordinary EM (Theorem \ref{thm:ascent}).  Since the 
MCEM update maximizes an estimate of the $Q$-function rather than the 
$Q$-function itself, there is no ascent property for MCEM in general.  
But a parameter update computed according to the Ascent-based MCEM rule 
will increase the log-likelihood with high probability.  Again the reader 
is referred to the source \citep{caff:jank:jone:2005} for details.  
Empirical comparisons between Ascent-based MCEM and 
\pcite{boot:hobe:1999} algorithm can be found in 
\citet{caff:jank:jone:2005} and \citet{neat:2006}.  

A second practical implication of the convergence properties of Monte 
Carlo EM relates to convergence criteria, or \emph{stopping rules} for 
the algorithm.  At what point should the MCEM iterations be terminated 
and the current parameter value accepted as the MLE?  The usual stopping 
rules employed in a deterministic iterative algorithm like ordinary EM 
terminate when it is apparent that further iterations (i) will not 
substantively change the approximation to the MLE, or (ii) will not 
substantively change the value of the objective (likelihood) function.  
For example, one might terminate at the first iteration $t$ to satisfy
\begin{equation}
\label{eqn:stopping}
\max_i \left\{ \frac{ | \theta_i^{(t)} - \theta_i^{(t-1)} | }{ 
| \theta_i^{(t)} | + \delta } \right\} < \epsilon
\end{equation}
for user-specified $\delta$ and $\epsilon$, where the maximum is taken 
over components of the parameter vector.  In Monte Carlo EM, such criteria 
run the risk of terminating too early, as \eqref{eqn:stopping} may be 
attained only because of Monte Carlo error in the update.  An obvious but 
inelegant solution is to terminate only after \eqref{eqn:stopping} is 
met for, say, three consecutive iterations.  This is the stopping rule 
recommended by \citet{boot:hobe:1999}.  Other MCEM stopping rules 
considered in the literature include \pcite{chan:ledo:1995} suggestion 
to terminate at the first iteration where $l(\theta^{(t)}; y) - 
l(\theta^{(t-1)}; y)$ is stochastically small; in a similar vein 
\citet{caff:jank:jone:2005} terminate when an asymptotic upper bound 
on $Q(\theta^{(t)} | \theta^{(t-1)}; y) - Q(\theta^{(t-1)} | \theta^{(t-1)}; y)$ 
falls below a pre-specified tolerance.  

Finally, we note that while our focus throughout has been on finding a 
good approximation to the MLE, meaningful statistical inference requires 
at minimum a reliable estimate of the standard error as well.  
A formula in \citet{loui:1982} expresses the observed Fisher Information 
as an expectation taken with respect to the conditional distribution of 
the unobserved data given the observed data.  Thus a Monte Carlo 
approximation to the inverse covariance matrix of the MLE is readily 
available from the simulation already conducted to compute the final 
MCEM update.

\bibliographystyle{imsart-nameyear}
\bibliography{mcref}

\end{document}